\theoremstyle{theorem}
\newtheorem{theorem}{Theorem}[section]
\newtheorem{lemma}[theorem]{Lemma}
\newtheorem{proposition}[theorem]{Proposition}
\newtheorem{corollary}[theorem]{Corollary}
\theoremstyle{definition}
\newtheorem{definition}[theorem]{Definition}
\newtheorem{example}[theorem]{Example}
\theoremstyle{remark}
\def\Z{{\mathbf  {Z}}}
\def\a{{\textrm {Ann}}}
\def\s{{\textrm {Supp}}}
\def\o{\overline}
\def\C{{\mathcal  {C}}}
\def\ra{\rightarrow}
\def\A{{\mathcal {A}}}
\def\Q{{\mathbb  {Q}}}
\begin{document}

\title {Properties of cellular classes of chain complexes} 
\author[Kiessling] 
{Jonas Kiessling} 
 
\address{Kiessling: Department of Mathematics\\ 
Kungliga Tekniska H\"ogskolan \\ 
S -- 100 44 Stockholm\\ 
SWEDEN} 
 
\email{jonkie@kth.se} 
 
\thanks{Research supported by grant KAW 2005.0098 from 
the Knut and Alice Wallenberg Foundation.}

\subjclass{}

\begin{abstract} 
In this paper we prove certain properties of cellular and acyclic classes of chain complexes of modules over a commutative Noetherian ring. In particular we show that if $X$ is finite and belongs to some cellular class $\C$ then $\Sigma^n H_n X$ also belongs to $\C$, for every $n$.
\end{abstract}

\maketitle

\section{Introduction}
The aim of this paper is to study cellular classes of chain complexes in the derived category $D(R)$ of a commutative and Noetherian ring $R$. A collection $\C$ is called cellular if it is closed under arbitrary direct sums and, for any exact triangle $X\rightarrow Y\rightarrow Z\rightarrow \Sigma X$ with $X$ and $Y$ in $\mathcal C$, the cone $Z$ is also in $\mathcal C$. Cellular classes lead to the following relation between complexes: $Y$ is said to be {\em $X$-cellular}  if $Y$ belongs to any cellular class containing $X$. We denote this relation by $Y \gg X$. The collection $\mathcal{C}(X)$ of all $X$-cellular complexes is a cellular class. In fact it is the smallest cellular class containing $X$.

The idea of studying cellular classes comes from the work of Dror-Farjoun (\cite{MR1392221}). Dror-Farjoun uses cellular classes to set up a framework for doing unstable homotopy theory in which many traditional theorems have a common generalization and explanation. Classification of cellular classes of finite CW-complexes however turned out to be out of reach. One motivation for this paper was to investigate if such a classification can be obtained in a more algebraic setting.

The first aim of the paper is to derive some algebraic properties of the relation "$\gg$" between finite complexes, for us a finite complex is a chain complex $X$ such that the $R$-module $\oplus H_i X$ is finitely generated. In particular we wish to understand this relation between finitely generated modules (a module is a chain complex concentrated in degree zero). We show (see Corollary \ref{cor:modules}) that, for finitely generated modules $N$ and $M$,  $N\gg M$ if and only if $N$ is a quotient of a direct sum of $M$. Similar implication holds more generally. Assume that  $X$ is a finite complex. In Corollary \ref{cor:homepi} we show that if $f:X\ra Y$  induces an epimorphism on all homology groups, then $Y$ is $X$-cellular. The reverse implication does not hold.  Unlike for modules, cellular relations between finite chain complexes is hard to describe explicitly. An illuminating example of the complexity is to let $R$ equal the ring $\Z/(4)$, where the cellular relation can be used to distinguish all indecomposable perfect chain complexes in $D({\mathbf Z}/(4))$ (see \cite{Kiessling}). 

One reason why cellular relations between chain complexes is hard to describe is the fact that $\C(X)$ does not only depend on the homology of $X$. It is true that if  $X$ is finite, then, for any $i$, $\Sigma^{i}H_{i}(X)$ is $X$-cellular (see \ref{theorem:main}). However it is not true in general that  $X$ is $\oplus_{i}\Sigma^{i}H_{i}(X)$-cellular. 
 
The second aim of the paper is to describe a weaker relation that for finite complexes only depends on the homology. We say that a class $\mathcal A$ is acyclic if it is cellular and it is closed under extensions: for any exact triangle $X\rightarrow Y\rightarrow Z\rightarrow \Sigma X$ with $X$ and $Z$ in $\mathcal A$, $Y$ is also in $\mathcal A$.  Acyclic classes lead to the following relation between complexes: $Y$ is said to be $X$-{\em acyclic} if $Y$ belongs to all acyclic classes containing $X$. We denote this relation by $Y>X$. The collection $\mathcal{A}(X)$ of all $X$-acyclic complexes is an acyclic class. It is the smallest acyclic class containing $X$.

Original motivation for acyclicity comes again from topology. For Spectra and topological spaces the acyclic relation is well understood by the work of Hopkins-Devinatz-Smith (see \cite{MR960945}, \cite{MR1652975}) and Bousfield (\cite{MR1397720}). A decade long effort culminated in the classification of acyclic classes of finite spectra (\cite{MR960945}, \cite{MR1652975}).

As for $\gg$ we wish to describe the relation $>$ in algebraic terms. It turns out that this can be done for finite chain complexes. The main result (see Corollary \ref{cor:whenacyclic}) is that if $X$ and $Y$ are finite then $Y>X$ if and only if, for every $k$, $\s(H_k Y) \subset \s( \oplus_{i \leq k} H_i X)$ (we let $\s(M)$ denote the support of $M$). In particular the relation $>$ depends only on the homology. 

At the end of this paper we discuss an invariant of acyclic classes originally defined by Stanley in \cite{Staney}. To an acyclic class $\A$ we associate an increasing function $\phi_{\A}$ from $\Z$ to the set of all specialization closed subsets of Spec $R$. We prove that two acyclic classes $\A$ and $\A'$ give rise to the same function if and only if they contain the same finite chain complexes. See Corollary \ref{cor:stanley}. 

There is a close connection between the two relations introduced above (cf. Proposition \ref{prop:acycliccell}). A systematic study of the relations between cellularity and acyclicity for simplicial sets can be found in \cite{MR1408539}. The proof of Proposition \ref{prop:acycliccell} is also inspired by the techniques used in \cite{MR1408539}.

The results mentioned above should be compared with the work of Neeman (see \cite{MR1174255}). Neeman classifies certain acyclic classes called \emph{localizing}. An acyclic class is localizing if it is closed under desuspension. He shows that localizing classes correspond to (arbitrary) sets of prime ideals in $R$. A localizing class contains a finite chain complex $X$ if and only if the corresponding set of prime ideals contains the support of the homology of $X$. We can therefor regard the classification of the relation $>$ as a generalization of Neeman's result concerning finite complexes. 

Recently there has been an increased interest in cellular-type relations between algebraic objects. Far reaching results on cellular classes and cellular covers of groups are obtained in \cite{MR2269828}, \cite{MR2357479}, \cite{Wojtek} and \cite{Wojtek2}. Acyclic classes of chain complexes and their relation to $t$-structures are studied in \cite{Staney} and \cite{Alonso}. There is nowadays a whole industry concerned with support-systems and thick subcategories in various triangulated categories, see e.g. \cite{MR1450996} and \cite{Beson}. The \emph{support} of an object $X$ in a triangulated category $\mathcal{T}$ is the smallest full triangulated subcategory of $\mathcal{T}$ containing $X$ that is closed under direct sums (also called the \emph{thick subcategory generated by} $X$). An analogous theory for modules over a commutative ring is developed in \cite{Krause}. Although cellular classes do not correspond directly to support-systems, there are most likely connections between the two concepts that deserve to be explored further.       

The first person to study acyclic classes of chain complexes was Stanley in his paper \cite{Staney} on invariants of $t$-structures. Stanley proved that when restricted to the derived category of finite chain complexes the acyclic classes provide a complete invariant of $t$-structures. 

There is an overlap between this paper and that of Stanley. The results in this paper on the relation $>$ are present also in \cite{Staney}. However the focus of this paper is different, our aim is to understand both of the relations $\gg$ and $>$ and how they relate to each other. 
 
\subsection*{Acknowledgements}
The author would like to thank Wojciech Chach{\'o}lski for his valuable ideas, assistance and encouragement. The research for part of this paper was carried out while the author was a visiting researcher at the Thematic Program on Geometric Applications of Homotopy Theory at the Fields Institute for Mathematics, Toronto.

\section{Notation and Conventions}
We let $R$ denote a commutative and Noetherian ring.
For an $R$-module $M$, $\text{Supp}(M)$ denotes the support of $M$, that is the set of prime ideals $p \subset R$ for which the localization at this prime $M_p$ is non-zero. Recall that if $N$ and $M$ are $R-$modules and $N$ is a submodule or a quotient of $M$ then $\s(N) \subset \s(M)$. If $M$ and $N$ are finitely generated and the support of $N$ is contained in the support of $M$ then there are non-trivial homomorphisms $M \rightarrow N$ (see for instance in \cite{MR1251956} p.9).
 
The derived category of $R$ is denoted by $D(R)$. Recall that $D(R)$ is obtained from the category of chain complexes by formally inverting all quasi isomorphisms. We use the homological grading: the differential lowers the degree by 1. A chain complex is said to be \emph{finite} if the module $\oplus_{i \in \Z} H_i X$ is finitely generated. If $X$ is a chain complex then the $R$-module in degree $k$ is denoted by $X_k$. A chain complex is said to be \emph{non-negative} if it is zero in negative degrees. More generally it is \emph{bounded below (above)} if it vanishes in degrees small (big) enough. A chain complex is bounded if it is bounded below and above. A bounded complex of finitely generated projective modules is called \emph{perfect}. 

In general not every morphism in $D(R)$ comes from a morphism of chain complexes. However if $X$ is a bounded below chain complex of projective modules, then every morphism $X \rightarrow Y$ in $D(R)$ can be lifted to a morphism of chain complexes $X \rightarrow Y$. This lift is unique up to homotopy. 

The symbol $\Sigma:D(R) \rightarrow D(R)$ denotes the suspension functor and $\otimes$ the left derived tensor product. The category $D(R)$ is  triangulated. In particular we have a collection of exact triangles and any $f:X \rightarrow Y$ can be completed to an exact triangle 
\[
\xymatrix{X \ar^{f}[r] &  Y \ar[r] & C_f \ar[r] & \Sigma X}
\]
We refer to $C_f$ as the cone of $f$ and to $\Sigma^{-1} C_f$ as the fiber of $f$. The cone $C_f$ is unique up to a non-unique isomorphism. For a more detailed account of $D(R)$ see for instance \cite{MR1269324}. 

Suppose that we are given a (possibly transfinite) directed system in $D(R)$:
\[
F = \xymatrix{X_0 \ar[r] & X_1 \ar[r] & \ldots \ar[r] & X_{\alpha} \ar[r] & X_{\alpha +1} \ar[r] & \ldots}
\]
The \emph{homotopy colimit} of this directed system is defined by the exact triangle:
\[ 
\xymatrix{\oplus_{i} X_i \ar^{1-\textrm{shift}}[r] & \oplus_{i} X_i \ar[r] & \textrm{Hocolim } F \ar[r] & \oplus_{i} \Sigma X_i}
\]
It is defined only up to a non-unique isomorphism. See \cite{MR1812507} or \cite{MR1214458} for more details.

The abelian category of $R$-modules is embedded in $D(R)$ as a the full subcategory consisting of those chain complexes which are concentrated in degree zero. We will therefor freely confuse modules with chain complexes concentrated in degree zero.


\section{A useful proposition} \label{section:support}
In this section we show how we in some cases can construct interesting maps from a chain complex to its homology.

If $X$ is chain complex such that $X_i=0$ for $i<k$, then the sequence of the zero homomorphisms in dimensions different from $k$ and the quotient homomorphism in dimension $k$ constitute a morphism of chain complexes  $X\ra \Sigma^{k}H_k(X)$ inducing an isomorphism on  $H_k$. It follows that if $X$ is a chain complex such that $H_i(X)=0$ for $i<k$, then there is a morphism $f:X\ra  \Sigma^{k}H_k(X)$ in $D(R)$ which induces an isomorphism on $H_k$. More generally:

\begin{proposition} \label{prop:useful}
Let $X$ be a bounded below chain complex with $H_i(X)$ finitely generated for every $i$. Let $p$ be a prime ideal such that $p\in \text{\rm Supp}(H_k(X))\setminus  \text{\rm Supp}\big(\oplus_{i<k}H_i(X)\big)$. Then there is a morphism $f:X\ra \Sigma^{k}H_k(X)$ in $D(R)$, for which the localization $f_p:X_p\ra\Sigma^{k}H_k(X)_p $ induces an isomorphism on $H_k$.
\end{proposition}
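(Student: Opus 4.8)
The plan is to localize the whole situation at $p$ and exploit the hypothesis that $p$ sees $H_k(X)$ but none of the lower homology. After localizing, the complex $X_p$ has $H_i(X_p) = H_i(X)_p = 0$ for all $i < k$, so $X_p$ is a bounded below chain complex whose homology vanishes below degree $k$. By the remark immediately preceding the proposition, this means there is already a morphism $g : X_p \ra \Sigma^k H_k(X_p) = \Sigma^k H_k(X)_p$ in $D(R_p)$ inducing an isomorphism on $H_k$. So the entire difficulty is \emph{not} in producing a local map --- that is essentially free --- but in showing that such a local map can be realized as the localization of a \emph{global} map $f : X \ra \Sigma^k H_k(X)$ defined over $R$.

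First I would reduce to a computation in the appropriate $\mathrm{Hom}$ group of $D(R)$. Since $X$ is bounded below and I may assume (replacing $X$ by a quasi-isomorphic complex) that it is a bounded below complex of projectives, morphisms $X \ra \Sigma^k H_k(X)$ in $D(R)$ are represented by honest chain maps, and lifting is unique up to homotopy. The target $\Sigma^k H_k(X)$ is concentrated in a single degree, so a chain map $X \ra \Sigma^k H_k(X)$ is the same datum as a map $X_k \ra H_k(X)$ that kills the image of the differential $X_{k+1} \ra X_k$ --- i.e.\ a map factoring through $\mathrm{coker}(X_{k+1} \ra X_k)$. The set of such maps is $\mathrm{Hom}_R\big(\mathrm{coker}(X_{k+1}\ra X_k),\, H_k(X)\big)$, and I want to find an element of this group whose localization at $p$ induces an isomorphism on $H_k$ after passing to $D(R_p)$.

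The crucial step is to move between the local and global $\mathrm{Hom}$ groups. Because everything in sight is finitely generated (the homology modules are finitely generated by hypothesis, and I can arrange the relevant cokernel to be finitely presented), localization commutes with $\mathrm{Hom}$:
\[
\mathrm{Hom}_{R_p}\big((\mathrm{coker})_p, H_k(X)_p\big) \cong \mathrm{Hom}_R\big(\mathrm{coker}, H_k(X)\big)_p.
\]
Thus the local map $g$, which is an element of the left-hand side, is (after clearing the denominator, a unit in $R_p$) in the image of the localization map from the global $\mathrm{Hom}$. This produces a global chain map $f : X \ra \Sigma^k H_k(X)$ whose localization at $p$ agrees with $g$ up to multiplication by a unit of $R_p$, and hence $f_p$ induces an isomorphism on $H_k$ exactly as $g$ does.

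The main obstacle I anticipate is the bookkeeping around representing the morphism in $D(R)$ by a chain map and controlling exactly which chain-level map induces an isomorphism on $H_k$ after localization; one must check that the local isomorphism on $H_k$ is preserved when the denominator of $g$ is cleared, which is automatic since clearing a denominator multiplies by a unit in $R_p$. A secondary subtlety is ensuring the relevant cokernel is finitely presented so that localization commutes with $\mathrm{Hom}$; since $R$ is Noetherian and I may choose a convenient projective model for $X$ this should cause no trouble, though if the projectives are not finitely generated one should instead work directly with $\mathrm{coker}(X_{k+1} \to X_k)$ modulo a suitable finitely generated approximation, using that $H_k(X)$ is finitely generated to detect the $H_k$-isomorphism on finitely many generators.
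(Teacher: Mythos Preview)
Your proposal is correct and follows essentially the same strategy as the paper: construct the local map $g$ using that $H_i(X_p)=0$ for $i<k$, then lift it to a global map by exploiting that the relevant Hom localizes because the source is finitely generated. The only cosmetic difference is that the paper packages the finiteness step by truncating $X$ to a perfect subcomplex $Y'$ (with $Y'_i=Y_i$ for $i\le k+1$) and invoking $\mathrm{Hom}_{D(R)}(Y',-)\otimes R_p\cong \mathrm{Hom}_{D(R_p)}(Y'_p,-)$, whereas you unwind this directly as $\mathrm{Hom}_R(\mathrm{coker}(d_{k+1}),H_k(X))$ and use that Hom from a finitely presented module commutes with localization; your remark that one may take the projective model to consist of finitely generated projectives (since $R$ is Noetherian and the homology is finitely generated) is exactly what makes this work and is also what the paper uses.
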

\begin{proof}
The assumption $p\in \text{\rm Supp}(H_k(X))\setminus  \text{\rm Supp}\big(\oplus_{i<k}H_i(X)\big)$ implies that $H_i(X_p)=0$ if $i<k$. Thus  there is $g:X_p\ra \Sigma^k H_k(X)_p$ inducing an isomorphism on $H_k$. It remains to show that $g$ is the localization of some $f:X\ra \Sigma^k H_k(X)$. In the case $X$ is perfect this follows from the fact that $\text{Hom}_{D(R)}(X,\Sigma^k H_k(X)) \otimes R_p \cong \text{Hom}_{D(R_p)}(X_p,\Sigma^k H_k(X)_p)$. In the general case, since $X$ is bounded below with finitely generated homology, it is isomorphic in $D(R)$ to a bounded below chain complex of finitely generated projective modules $Y$. Its sub-complex $Y'$, given by:
\[
Y'_i=\begin{cases}
Y_i &\text{ if } i\leq k+1\\
0&\text{ if } i> k+1
\end{cases}
\]
is then perfect. The composition $Y'\subset Y\cong X\ra X_p\xrightarrow{g} \Sigma^k H_k(X))_p$ therefore factors through $f':Y'\ra  \Sigma^k H_k(X)$.  Note however that any morphism of chain complexes $f':Y'\ra  \Sigma^k H_k(X)$ is a restriction of  $f:Y\ra   \Sigma^k H_k(X)$. This $f$ satisfies then the requirements of the proposition.
\end{proof}


\section{Cellular Classes} \label{section:cellular}

We say that a collection $\C$ of chain complexes is closed under cones if given any exact triangle:
\[
\xymatrix{X \ar[r] & Y \ar[r] & Z \ar[r] & \Sigma X}
\]
such that $X$ and $Y$ belong to $\C$ then so does $Z$.

\begin{definition} 
A non-empty class of objects $\C \subset D(R)$ is called a \emph{cellular class} if it is closed under arbitrary direct sums and cones. 
\end{definition} 

Any cellular class $\C$ contains the zero object, it is closed under isomorphisms and it is closed under suspensions. To see this note that the zero fits into the triangle:
\[
\xymatrix{X \ar^1[r] & X \ar[r] & 0 \ar[r] & \Sigma X}
\]
and if $X$ and $Y$ are isomorphic then there is an exact triangle:
\[
\xymatrix{0 \ar[r] & X \ar[r] & Y \ar[r] & 0}
\]
Finally $\Sigma X$ is the cone of the map $X \rightarrow 0$. 

It follows from the definitions that cellular classes are closed under directed homotopy colimits. They are also closed under retracts. This is a consequence of the fact that if $i:Y \rightarrow X$ and $p:X \rightarrow Y$ are maps in $D(R)$ such that $ip$ is the identity on $Y$ then $Y$ is the homotopy colimit of the directed system (see \cite{MR1812507} p. 65):
\[
\xymatrix{X \ar^{ip}[r] & X \ar^{ip}[r] & X \ar^{ip}[r] & \ldots}
\]
Hence if $X$ belongs to a cellular class then so does $Y$.

Chain complexes that can be built out of $X$ using only sums and cones will belong to any cellular class containing $X$. The collection of all such complexes is itself a cellular class. This is the collection of all complexes cellular to $X$: 

\begin{definition}
Fix a chain complex $X$. We let $\C(X)$ denote the smallest cellular class containing $X$. Objects in $\C(X)$ are called $X-$cellular. If $Y$ is $X-$cellular then we write that $Y \gg X$. 
\end{definition}

Let $X$ be a chain complex and $I$ an index set. Cellular classes are closed under retracts and sums so any cellular class containing $X$ will contain $\oplus_I X$ and vice versa. Hence there is an equality $\C(\oplus_I X) = \C(X)$.

The cellular class $\C(R)$, the collection of all complexes cellular to the free module on one generator, is equal to all complexes with vanishing negative homology. To see this note that any such chain complex is isomorphic to a non-negative chain complex $X$ such that $X$ is free in each degree. Then write $X$ as the homotopy colimit of complexes $X^k$, $k \geq 0$, where $X^0$ is the module $X_0$ and inductively $X^{k+1}$ is the cone of $-\delta_{k+1}: \Sigma^k X_{k+1} \rightarrow X^k$.  
 
The relation $\gg$ is transitive. In other words, if $X$ is $Y$-cellular and $Y$ is $Z$-cellular then $X$ is $Z$-cellular. It also behaves well with respect to the tensor product:

\begin{lemma} \label{lemma:tensor}
Let $X$, $Y$ and $Z$ be chain complexes. If $Y$ is $X$-cellular, then $Y \otimes Z$ is $X \otimes Z$-cellular.
\end{lemma}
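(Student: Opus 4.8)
The plan is to avoid tracking how $Y$ is built from $X$ directly, and instead use the minimality of $\C(X)$. The idea is to consider the collection of all complexes that tensor well with $Z$, show it is itself a cellular class containing $X$, and then conclude. Concretely, I would set
\[
\C' = \{ W \in D(R) : W \otimes Z \gg X \otimes Z \},
\]
the class of complexes $W$ whose tensor product with $Z$ is $(X \otimes Z)$-cellular. The lemma then reduces to showing that $\C'$ is a cellular class containing $X$: for once this is established, the minimality of $\C(X)$ gives $\C(X) \subseteq \C'$, and since $Y \gg X$ means $Y \in \C(X)$, we obtain $Y \in \C'$, that is $Y \otimes Z \gg X \otimes Z$.

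First I would check that $X \in \C'$; this is immediate, since $X \otimes Z$ is $(X \otimes Z)$-cellular (every complex is cellular to itself). Next, closure under direct sums follows from the fact that the derived tensor product commutes with arbitrary direct sums: if $W_i \in \C'$ for all $i$, then $(\oplus_i W_i) \otimes Z \cong \oplus_i (W_i \otimes Z)$, and this is $(X \otimes Z)$-cellular because cellular classes are closed under sums. Closure under cones rests on the fact that $- \otimes Z$ is an exact functor on $D(R)$: given an exact triangle $W_1 \ra W_2 \ra W_3 \ra \Sigma W_1$ with $W_1, W_2 \in \C'$, applying $- \otimes Z$ produces an exact triangle
\[
\xymatrix{W_1 \otimes Z \ar[r] & W_2 \otimes Z \ar[r] & W_3 \otimes Z \ar[r] & \Sigma(W_1 \otimes Z)}
\]
whose first two terms are $(X \otimes Z)$-cellular; since cellular classes are closed under cones, the cone $W_3 \otimes Z$ is $(X \otimes Z)$-cellular as well, so $W_3 \in \C'$.

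The only ingredient needing justification is the exactness of $- \otimes Z$, i.e. that the left derived tensor product carries exact triangles to exact triangles and commutes with suspension and with direct sums. This is a standard property of the derived tensor product on $D(R)$, so I expect it to be a fact to invoke rather than a genuine obstacle; the remainder of the argument is the formal manipulation with the smallest cellular class. The same template, incidentally, will apply to any exact functor commuting with direct sums, which is the real content of the lemma.
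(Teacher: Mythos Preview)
Your argument is correct and is exactly the approach taken in the paper: define the class of $W$ with $W\otimes Z\in\C(X\otimes Z)$, observe it is cellular and contains $X$ because $-\otimes Z$ preserves sums and exact triangles, and conclude by minimality of $\C(X)$. The paper's proof is simply a terser version of what you wrote.
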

\begin{proof}
The tensor product with $Z$ takes exact triangles to exact triangles and preserves sums. Therefor the collection $\C$ of all chain complexes $Y$ such that $Y \otimes Z$ is $X \otimes Z$-cellular is a cellular class containing $X$. Hence $\C(X) \subset \C$ and we have proved the lemma.
\end{proof}

Fix an $R$-module $M$. Consider the collection $\C$ of all chain complexes $X$ that have vanishing negative homology and $H_0(X)$ is a quotient of a sum of $M$. The class $\C$ is a cellular class containing $M$. So if $X$ is $M$-cellular then $H_0(X)$ must be a quotient of a sum of $M$. In general this is not a sufficient condition. However we will show that if $M$ and $N$ are finitely generated modules then $N \gg M$ if and only if $N$ is a quotient of a sum of $M$ (Corollary \ref{cor:modules}).

More generally one can ask if cellular classes are closed under arbitrary homological epimorphisms of chain complexes.

We can let $R$ be the ring of integers $\Z$ and $\C$ the collection of all chain complexes $X \in D(\Z)$ such that the canonical map $X \rightarrow X \otimes \Q$ is an isomorphism, that is the homology of $X$ is a $\Q$-vector space. Because tensoring preserves exact triangles and commutes with direct sums, $\C$ is a cellular class. It also contains $\Q$. However it does not contain $\Q/\Z$. Hence cellular classes are in general not closed under homological epimorphisms. 

It is true however that cellular classes are closed under certain homological epimorphisms. For instance:

\begin{lemma} \label{lemma:principal}
Let $X$ be a chain complex and $I \subset R$ an ideal. Fix an integer $k$. If there is a map
\[
\xymatrix{f:X \ar[r] & \Sigma^k R/I}
\]
such that $f$ induces a surjection on homology. Then $\Sigma^k R/I \gg X$.
\end{lemma}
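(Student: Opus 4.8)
The plan is to realise $\Sigma^{k}R/I$ as a retract of $X\otimes \Sigma^{k}R/I$, after first checking that this latter complex is $X$-cellular; since cellular classes are closed under retracts, this will give $\Sigma^{k}R/I\gg X$.

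I would begin by reducing to the case $k=0$. The functor $\Sigma^{-k}$ is an auto-equivalence of $D(R)$ preserving direct sums and exact triangles, so it carries cellular classes to cellular classes and $\C(\Sigma^{-k}X)=\Sigma^{-k}\C(X)$; hence $\Sigma^{k}R/I\gg X$ is equivalent to $R/I\gg \Sigma^{-k}X$. Replacing $X$ by $\Sigma^{-k}X$ and $f$ by $\Sigma^{-k}f$ I may therefore assume $k=0$, so that $f\colon X\to R/I$ and the hypothesis says exactly that $H_{0}(f)\colon H_{0}(X)\to R/I$ is onto (in every other degree the target is zero). To see that $X\otimes R/I$ is $X$-cellular, note that the module $R/I$ has no homology in negative degrees, so by the computation of $\C(R)$ it is $R$-cellular, $R/I\gg R$. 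Applying Lemma \ref{lemma:tensor} with $R$, $R/I$, $X$ in the roles of $X$, $Y$, $Z$ then gives $R/I\otimes X\gg R\otimes X$, and since $R\otimes X\cong X$ this reads $X\otimes R/I\in\C(X)$.

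The heart of the proof is the construction of the retraction, and this is where the hypothesis that the target is the cyclic module $R/I$ (equivalently, a quotient ring) is indispensable; the example $\Q\twoheadrightarrow\Q/\Z$ shows that no such statement can hold for arbitrary homological epimorphisms. Because $H_{0}(f)$ is onto I may choose, in $H_{0}(X)=\mathrm{Hom}_{D(R)}(R,X)$, a class lying over the generator $\bar 1\in R/I$; this is a map $g\colon R\to X$ with $fg=\pi$, the projection $R\to R/I$. I would then set
\[
s\colon R/I=R\otimes R/I\xrightarrow{\,g\otimes 1\,}X\otimes R/I,\qquad p\colon X\otimes R/I\xrightarrow{\,f\otimes 1\,}R/I\otimes R/I\xrightarrow{\ \mathrm{mult}\ }R/I,
\]
where $\mathrm{mult}$ is the canonical map to $H_{0}$ (the complex $R/I\otimes R/I$ has no negative homology and $H_{0}(R/I\otimes R/I)\cong R/I$ since $R/I$ is cyclic), that is, the multiplication of the ring $R/I$. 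Then $ps=\mathrm{mult}\circ\big((fg)\otimes 1\big)=\mathrm{mult}\circ(\pi\otimes 1)$, which is the unit axiom of $R/I$: being a morphism between modules concentrated in degree $0$ it is determined by its effect on $H_{0}$, where it sends $\bar a\mapsto\bar 1\cdot\bar a=\bar a$, so $ps=\mathrm{id}_{R/I}$. This exhibits $R/I$ as a retract of $X\otimes R/I\in\C(X)$, whence $R/I\in\C(X)$.

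I expect the only real obstacle to be finding the map $p$ — recognising that the multiplication of the quotient ring $R/I$, paired with a section built from a lift $g$ of the generator, produces the splitting. Once this is seen, the cellularity of $X\otimes R/I$ is immediate from the tensor lemma and the identity $ps=\mathrm{id}$ is the one-line unit-axiom check above.
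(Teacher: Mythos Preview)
Your proof is correct and follows the same approach as the paper: show $X\otimes R/I\gg X$ via Lemma~\ref{lemma:tensor}, then exhibit $\Sigma^{k}R/I$ as a retract of $X\otimes\Sigma^{k}R/I$. The paper simply asserts the retraction exists, whereas you spell out the section and retraction explicitly using a lift of $\bar 1$ and the multiplication map of $R/I$; this is exactly the content the paper leaves implicit.
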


\begin{proof}
The $R$-module $R/I$ is $R$-cellular. By lemma \ref{lemma:tensor} $X \otimes R/I$ is $X \otimes R = X$-cellular. Suppose that the map $f$ exists. Then $\Sigma^k R/I$ is a retract of $X \otimes \Sigma^k R/I$. The class $\C(X)$ is closed under retracts so the lemma follows. 
\end{proof}

Later we show that cellular classes are closed under homological epimorphisms of finite chain complexes (see Corollary \ref{cor:homepi}).

Cellar classes are \emph{not} closed under homological monomorphisms, even of finitely generated modules. This is illustrated in the example below.

\begin{example} \label{example:notcellular}
Let $k$ denote some field and let $R = k[X, Y]$ be the polynomial ring in two variables. We fix the ideal $(X, Y)$. Any element of $(X,Y)$ generates a free $k[X,Y]$-module since $k[X,Y]$ is a domain. Hence we can regard $k[X,Y]$ as a submodule of $(X,Y)$.   

The image of any homomorphism of $k[X,Y]$-modules from $(X,Y)$ to $k[X,Y]$ must lie in the ideal $(X,Y)$. Hence there is no epimorphism from any sum of $(X,Y)$ onto $k[X,Y]$. 

As we noted above a necessary condition for a module to be $(X,Y)$-cellular is to be a quotient of a sum of $(X,Y)$, so $k[X,Y]$ is not $(X,Y)$-cellular. 
\end{example}

\section{Acyclic Classes} \label{section:acyclic}

We say that a class $\A$ of objects in $D(R)$ is closed under extensions if given any exact triangle:
\[
\xymatrix{
X \ar[r] & Y \ar[r] & Z \ar[r] & \Sigma X }
\]
such that $X$ and $Z$ belong to $\A$ then also $Y$ belongs to $\A$. 

Only certain cellular classes are closed under extensions:

\begin{definition}
A cellular class $\A \subset D(R)$ is called an \emph{acyclic class} if it is closed under extensions. 
\end{definition}

If we fix $X$ then the intersection of all acyclic classes containing $X$ is an acyclic class. This is the class of $X$-acyclic complexes:

\begin{definition}
Let $X \in D(R)$ be a chain complex. The class $\A(X)$ is defined to be the smallest acyclic class containing $X$. The objects of $\A(X)$ are called $X-$acyclic. If $Y$ is $X-$acyclic then we write that $Y > X$.
\end{definition}

Acyclic classes are easier to classify then cellular classes. We will show that if $X$ is finite then $\A(X)$ depends only on the support of the homology of $X$ (see Corollary \ref{cor:homologyonly} and \ref{cor:whenacyclic}). We will also show that acyclic classes are closed under homological epimorphisms and homological monomorphisms between finite chain complexes (see Corollary \ref{cor:hommonoepi}).

The relation $>$ is transitive. It also behaves well with respect to the tensor product:

\begin{lemma} \label{lemma:tensor2}
Let $X$, $Y$ and $Z$ be chain complexes. If $Y>X$, then $Y \otimes Z > X \otimes Z$.
\end{lemma}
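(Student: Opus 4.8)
The plan is to mimic the proof of Lemma \ref{lemma:tensor}, replacing ``cellular'' by ``acyclic'' throughout, and to check the one additional axiom. Concretely, I would fix $Z$ and consider the collection
\[
\A = \{ W \in D(R) : W \otimes Z > X \otimes Z \}
\]
of all chain complexes whose tensor product with $Z$ is $X \otimes Z$-acyclic. The goal is to show that $\A$ is an acyclic class containing $X$; then, since $\A(X)$ is the smallest acyclic class containing $X$, we get $\A(X) \subset \A$, and in particular $Y \in \A(X)$ yields $Y \otimes Z > X \otimes Z$, which is exactly the claim.

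The key structural input, already exploited in Lemma \ref{lemma:tensor}, is that the left derived tensor product $-\otimes Z$ is an exact functor of triangulated categories: it sends exact triangles to exact triangles and commutes with arbitrary direct sums. Membership of $X$ in $\A$ is immediate, since $X \otimes Z > X \otimes Z$ (every object is acyclic to itself). Closure of $\A$ under direct sums and under cones is then checked exactly as in the cellular case: given $W_i \in \A$, the isomorphism $(\oplus_i W_i)\otimes Z \cong \oplus_i (W_i \otimes Z)$ together with the fact that $\A(X \otimes Z)$ is closed under sums gives $\oplus_i W_i \in \A$; and given an exact triangle with its first two terms in $\A$, tensoring with $Z$ produces an exact triangle whose first two terms lie in $\A(X \otimes Z)$, so its cone does too.

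The only genuinely new step, absent from Lemma \ref{lemma:tensor}, is closure under extensions. Here I would take an exact triangle $W_1 \to W_2 \to W_3 \to \Sigma W_1$ with $W_1, W_3 \in \A$. Applying $-\otimes Z$ yields an exact triangle
\[
\xymatrix{W_1 \otimes Z \ar[r] & W_2 \otimes Z \ar[r] & W_3 \otimes Z \ar[r] & \Sigma(W_1 \otimes Z)}
\]
whose outer terms are $X \otimes Z$-acyclic by hypothesis. Since $\A(X \otimes Z)$ is by definition an acyclic class, it is closed under extensions, so the middle term $W_2 \otimes Z$ is $X \otimes Z$-acyclic, i.e. $W_2 \in \A$. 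This completes the verification that $\A$ is an acyclic class. I do not anticipate any real obstacle: the whole argument rests on exactness of $-\otimes Z$ and on the defining closure properties of $\A(X\otimes Z)$, so it is a routine transfer of the cellular argument with the single extra extension axiom.
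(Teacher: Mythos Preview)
Your proposal is correct and follows exactly the approach the paper indicates: the paper's own proof simply reads ``The proof goes as the proof of \ref{lemma:tensor},'' and your write-up is precisely that argument, with the one extra check that the class $\A$ is closed under extensions.
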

\begin{proof}
The proof goes as the proof of \ref{lemma:tensor}. 
\end{proof}

Since $\A(X)$ is a cellular class containing $X$ we have by definition $\C(X) \subset \A(X)$. So the relation $Y \gg X$ implies the relation $Y > X$. 

In general a cellular class need not be acyclic, hence $Y>X$ does not imply $Y \gg X$. However the proposition below indicates that knowledge about the class $\A(X)$ does have strong implications for $\C(X)$. We will make use of this proposition several times in the later chapters, see for instance the proofs of Corollary \ref{cor:modules} and Theorem \ref{theorem:main}. 

\begin{proposition} \label{prop:acycliccell}
Suppose that in an exact triangle:
\[
\xymatrix{Y \ar[r] & Z \ar[r] & W \ar[r] & \Sigma Y }
\]
the relations $Y>X$ and $Z \gg X$ hold. Then also $W \gg X$.
\end{proposition}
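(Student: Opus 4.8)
The plan is to reduce everything to showing that a single auxiliary class is acyclic. Introduce
\[
\D = \{\, Y \in D(R) : \text{for every } Z \gg X \text{ and every } f:Y\ra Z, \text{ the cone } C_f \gg X \,\}.
\]
If I can prove that $\D$ is an acyclic class containing $X$, then the smallest-acyclic-class property forces $\A(X)\subseteq \D$. The hypothesis $Y>X$ means $Y\in\A(X)\subseteq\D$, and the given triangle exhibits $W$ as the cone of the map $Y\ra Z$ with $Z\gg X$; membership $Y\in\D$ then delivers $W\gg X$ immediately. So the entire proposition reduces to verifying that $\D$ is closed under sums, cones and extensions and contains $X$.

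Two easy observations come first. Because $\C(X)$ is closed under cones, any $V\gg X$ already lies in $\D$ (the cone of $V\ra Z$ with $V,Z\gg X$ is $\gg X$); hence $\C(X)\subseteq\D$ and in particular $X\in\D$. The decisive trick is that $\D$ is closed under suspension: if $A\in\D$, apply the defining property to $Z=0$ (which is $\gg X$) and the zero map, whose cone is $\Sigma A$; this gives $\Sigma A\gg X$, so $\Sigma A\in\C(X)\subseteq\D$. I would then get closure under cones and extensions from the octahedral axiom, which for a composite $B\xrightarrow{b}C\xrightarrow{f}Z$ supplies a triangle $C_b\ra C_{fb}\ra C_f\ra\Sigma C_b$. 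For cone-closure, take $A\ra B\xrightarrow{b}C$ with $A,B\in\D$ and any $f:C\ra Z$, $Z\gg X$; then $C_b=\Sigma A$ and $C_{fb}\gg X$ (as $B\in\D$), and the octahedral triangle presents $C_f$ as the cone of a map $\Sigma A\ra C_{fb}$, so $\Sigma A\in\D$ together with $C_{fb}\gg X$ yields $C_f\gg X$. For extension-closure, take $A\xrightarrow{a}B\ra C$ with $A,C\in\D$ and $f:B\ra Z$; now $C_a=C$ and $C_{fa}\gg X$ (as $A\in\D$), and the octahedral triangle $C\ra C_{fa}\ra C_f$ presents $C_f$ as the cone of a map $C\ra C_{fa}$ whose source lies in $\D$ and whose target is $\gg X$, again giving $C_f\gg X$.

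The remaining step, closure under arbitrary direct sums, is where I expect the main obstacle. Given $Y_\alpha\in\D$ and $f:\oplus_\alpha Y_\alpha\ra Z$ with $Z\gg X$, I would well-order the index set and construct transfinitely the cones $W_\beta$ of the partial maps $\oplus_{\alpha<\beta}Y_\alpha\ra Z$, starting from $W_0=Z$. At a successor stage the octahedral axiom applied to $\oplus_{\alpha<\beta}Y_\alpha\hookrightarrow\oplus_{\alpha<\beta+1}Y_\alpha\ra Z$ identifies $W_{\beta+1}$ with the cone of a map $Y_\beta\ra W_\beta$, so $Y_\beta\in\D$ and $W_\beta\gg X$ force $W_{\beta+1}\gg X$. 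At a limit stage $W_\beta$ is the homotopy colimit of the $W_\gamma$ with $\gamma<\beta$, which stays in $\C(X)$ because cellular classes are closed under directed homotopy colimits. The delicate points are organizing the cones into a genuine directed system (cones are not functorial, so one must pass to the homotopy-colimit models of the cited references) and checking that the homotopy colimit of these cofiber sequences is the cofiber sequence of the colimit map; granting these standard facts, the terminal cone is $C_f\gg X$.

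Assembling the pieces, $\D$ is closed under direct sums and cones, hence cellular, and is also closed under extensions, hence an acyclic class; since it contains $X$ we conclude $\A(X)\subseteq\D$, and the proposition follows as explained. The genuinely clever ingredient is the suspension trick via $Z=0$, which lets the octahedral arguments close up; the genuinely laborious ingredient is the transfinite sum-closure.
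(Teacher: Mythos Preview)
Your argument is correct and follows the same overall strategy as the paper: define an auxiliary class, prove it is acyclic and contains $X$, and conclude $\A(X)$ is contained in it. The octahedral arguments for cone- and extension-closure, and the transfinite/homotopy-colimit argument for sum-closure, match the paper's proof essentially step for step; you are in fact more explicit than the paper about the functoriality-of-cones subtlety at limit stages.

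The one genuine difference is how suspension-closure of the auxiliary class is obtained. The paper bakes it into the definition, taking the class of those $Y$ such that for \emph{every} $n\geq 0$ and every triangle $\Sigma^n Y\to Z\to W\to\Sigma^{n+1}Y$ with $Z\gg X$ one has $W\gg X$; this makes closure under $\Sigma$ tautological at the cost of dragging the extra quantifier through the whole proof. You instead define $\D$ without the suspension and recover $\Sigma A\in\D$ from the one-line observation that $0\gg X$ and the cone of $A\to 0$ is $\Sigma A$, which in fact gives the stronger statement $\Sigma A\gg X$. Your route is a little cleaner; the paper's route avoids having to notice that $\C(X)\subseteq\D$. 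Either way the content is the same.
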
 

\begin{proof}
Let $\A$ be the class of all chain complexes $Y$ such that given any $n \geq 0$ and any exact triangle
\[
\xymatrix{\Sigma^n Y \ar[r] & Z \ar[r] & W \ar[r] & \Sigma^{n+1} Y }
\]
with $Z \gg X$, then $W \gg X$.

We wish to show that $\A(X) \subset D$. For this it is enough to show that $D$ is an acyclic class containing $X$. 

By definition $D$ is closed under suspensions. Also $X \in D$  since cones preserves cellularity.

It remains to show that $D$ is closed under arbitrary sums, cones and extensions. We shall make use of the Octahedral Axiom (\cite{MR1269324} p.378 or \cite{MR1812507} p. 58).

We first show that it is closed under cones. Let
\[
\xymatrix{Y_1 \ar[r] & Y_2 \ar[r] & Y_3 \ar[r] & \Sigma Y_1}
\]
be any exact triangle such that $Y_1$, $Y_2 \in D$. Suppose that $Z$ is $X$-cellular and that we have a map $\Sigma^n Y_3 \rightarrow  Z$ for some $n \geq 0$. We need to show that the cone $C_3$ of this map, is $X-$cellular. For simplicity we may assume that $n=0$. By the Octahedral axiom the triple $Y_2 \rightarrow  Y_3 \rightarrow  Z$ can be completed into a commutative diagram:
\[
\xymatrix{
Y_2 \ar^1[d] \ar[r] & Y_3 \ar[d] \ar[r] & \Sigma Y_1 \ar[d] \ar[r] & \Sigma Y_2 \ar^1[d] \\
Y_2 \ar[r] \ar[d]   & Z \ar[d] \ar[r]   & C_2 \ar[d] \ar[r] & \Sigma Y_2 \ar[d] \\
0 \ar[d] \ar[r]     & C_3 \ar^1[r]  \ar[d]    & C_3 \ar[d] \ar[r] & 0 \ar[d] \\
\Sigma Y_2 \ar[r]   & \Sigma Y_3  \ar[r]      & \Sigma^2 Y_1 \ar[r] & \Sigma^2 Y_2 }
\]
Here $C_2$ denotes the cone of the composite map $Y_2 \rightarrow Z$. All rows and columns are exact triangles. The complex $C_2$ is $X-$cellular since $Y_2 \in D$. Moreover $\Sigma Y_1 \in D$ since $D$ is closed under suspensions. The third column shows that $C_3$ is $X-$cellular and we are done.

In a similar fashion we show that $D$ is closed under extensions.

We finally show that $D$ is closed under sums. Suppose that $\{Y_i\}_{\alpha}$ is a family of chain complexes such that $Y_i \in D$. Suppose further that $D$ is closed under sums over ordinals smaller then $\alpha$. We can assume that $\alpha$ is a limit ordinal. Let $Z$ be any $X-$cellular chain complex and fix a map $\Sigma^n (\oplus_{\alpha} Y_i) \rightarrow  Z$, for some $n \geq 0$ which we may assume to be 0. We denote the cone of this map by $C$. We wish to show that $C$ is $X-$cellular. 

Let $\o{Y}_k = \oplus_{i < k} Y_k$. For $k < k'$ there are canonical maps $\o{Y}_k \rightarrow \o{Y}_{k'}$. Let $C_k$ denote the cone of the composite $\o{Y}_k \rightarrow Z$. The canonical maps induce maps $C_k \rightarrow C_{k'}$ when $k < k'$. Moreover $C$ is isomorphic to the homotopy colimit of this directed system. By the induction hypothesis each $C_k$ is $X-$cellular.  A cellular class is closed under directed homotopy colimits so $C$ is $X-$cellular.

We conclude that $D$ is an acyclic class containing $X$. It follows that $\A(X) \subset D$ and we are done.
\end{proof}

An analogous to Proposition \ref{prop:acycliccell} was originally obtained in the topological setting by Dror-Farjoun (see \cite{MR1392221}).

As a first application we prove that the suspension of an $X$-acyclic complex is $X$-cellular.

\begin{corollary} \label{cor:suspacyclic}
Fix two chain complexes $X$ and $Y$. If $Y$ is $X$-acyclic then $\Sigma Y$ is $X-$cellular.
\end{corollary}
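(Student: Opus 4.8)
The plan is to deduce this directly from Proposition \ref{prop:acycliccell} by choosing the right exact triangle. Recall from Section \ref{section:cellular} that $\Sigma Y$ is the cone of the map $Y \rightarrow 0$, so we have an exact triangle
\[
\xymatrix{Y \ar[r] & 0 \ar[r] & \Sigma Y \ar[r] & \Sigma Y}
\]
This is precisely the shape of triangle appearing in Proposition \ref{prop:acycliccell}, namely $Y \rightarrow Z \rightarrow W \rightarrow \Sigma Y$ with $Z = 0$ and $W = \Sigma Y$.

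To apply the proposition I would verify its two hypotheses for this triangle. First, $Y > X$ holds by assumption, since $Y$ is $X$-acyclic. Second, the middle term $Z = 0$ satisfies $Z \gg X$: as observed at the start of Section \ref{section:cellular}, every cellular class contains the zero object, so in particular $0 \in \C(X)$, i.e. $0 \gg X$. With both hypotheses in place, Proposition \ref{prop:acycliccell} yields $W \gg X$, that is $\Sigma Y \gg X$, which is exactly the claim.

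There is essentially no obstacle here once the correct triangle is identified; the entire weight of the argument has already been discharged in the proof of Proposition \ref{prop:acycliccell}. The only point requiring care is the bookkeeping of which vertices of the triangle play the roles of $Y$, $Z$ and $W$, and the small observation that the zero object is $X$-cellular for trivial reasons. Thus the corollary is a one-step specialization of the proposition.
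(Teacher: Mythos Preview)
Your proof is correct and is essentially identical to the paper's own argument: both apply Proposition~\ref{prop:acycliccell} to the exact triangle $Y \to 0 \to \Sigma Y \to \Sigma Y$, using that $0 \gg X$ and $Y > X$.
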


\begin{proof}
Assume that $Y>X$. From the discussion in Section \ref{section:cellular} know that $0 \gg X$. Now apply the previous proposition to the exact triangle
\[
\xymatrix{Y \ar[r] & 0 \ar[r] & \Sigma Y \ar[r] & \Sigma Y}
\]
\end{proof}

Let $X$ be a chain complex and $I$ an index set. Since acyclic classes are in particular cellular, they are closed under retracts and sums. It follows that any acyclic class containing $X$ will contain $\oplus_I X$ and vice versa. Hence $\A(X) = \A(\oplus_I X)$.

If we fix an $R$-module $M$ and consider the collection $\A$ of all chain complexes $X$ such that the support of the homology of $X$ is a subset of the support of $M$, then it is straightforward to verify that $\A$ is an acyclic class containing $M$, hence $\A(M) \subset \A$. In other words a necessary condition for $X$ to be $M$-acyclic is $\s(\oplus H_i X) \subset \s(M)$. Compare with \ref{theorem:modules} and \ref{cor:whenacyclic}.

\section{Cellularity and Acyclicity of Modules} \label{section:acyclicitymodules}

This section is dedicated to the proof of a theorem characterizing the cellular and acyclic relations between finitely generated $R$-modules.

Let $M$ and $N$ be modules. We noted in section \ref{section:acyclic} that if $N$ is $M$-acyclic then the support of $N$ must be a subset of the support of $M$. The main result in this section is that for finitely generated modules the converse is also true. That is: 

\begin{theorem} \label{theorem:modules}
Let $M$ and $N$ be finitely generated $R$-modules. Then $N$ is $M$-acyclic if and only if the support of $N$ is contained in the support of $M$.
\end{theorem}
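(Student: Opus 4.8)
The plan is to treat the two implications separately. The forward implication is immediate: the necessary condition $\s(N)\subset\s(M)$ for $N$ to be $M$-acyclic was already recorded at the end of Section \ref{section:acyclic}, since the class of all complexes whose homology is supported in $\s(M)$ is an acyclic class containing $M$. So the whole content is the converse: assuming $\s(N)\subset\s(M)$, I want to show $N\in\A(M)$. Since $R$ is Noetherian and $N$ is finitely generated, $N$ admits a prime filtration $0=N_0\subset N_1\subset\cdots\subset N_r=N$ with $N_i/N_{i-1}\cong R/p_i$ and $p_i\in\s(N)\subset\s(M)$. Because $\A(M)$ is closed under extensions, the short exact sequences of this filtration reduce the problem to proving $R/p\in\A(M)$ for a single prime $p\in\s(M)$ (note $\s(M)$ is specialization closed for finitely generated $M$, so $\s(R/p)\subset\s(M)$ automatically).

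To prove $R/p\in\A(M)$ I would use Noetherian induction on the closed set $\s(R/p)$, so that the inductive hypothesis is: every finitely generated module with support strictly contained in $\s(R/p)$ lies in $\A(M)$. Since $\s(R/p)\subset\s(M)$, the support fact quoted in Section \ref{section:support} provides a nonzero homomorphism $g:M\ra R/p$; localizing at $p$ one can arrange $g_p$ to be surjective onto $\kappa(p)$. Writing the image of $g$ as $J/p$ with $J\supsetneq p$, the cokernel $R/J$ satisfies $\s(R/J)\subsetneq\s(R/p)$, so $R/J\in\A(M)$ by the inductive hypothesis. Extension-closure applied to $0\ra J/p\ra R/p\ra R/J\ra 0$ then reduces the claim to $J/p\in\A(M)$. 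As $J/p=M/\ker g$ is a quotient of $M$, Proposition \ref{prop:acycliccell} applied to the triangle $\ker g\ra M\ra J/p\ra\Sigma\ker g$ (with $M\gg M$) shows it is enough to prove that $\ker g$ is $M$-acyclic. The base case, where $p$ is maximal, is easy: $M\twoheadrightarrow M/pM\twoheadrightarrow R/p$, so Lemma \ref{lemma:principal} gives $R/p\gg M$ directly.

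The main obstacle is precisely the term $\ker g$. When $M_p$ has length greater than one the kernel still has full support $\s(R/p)$, so Noetherian induction on the support does not make it smaller, and one cannot simply invoke closure under cones (the module $R/p$ need not be $M$-\emph{cellular} at all, as Example \ref{example:notcellular} shows); extension-closure, the feature distinguishing acyclic from cellular classes, is genuinely needed here. My proposed remedy is a secondary induction at the generic point: pass to the domain $R/p$, split off a torsion-free quotient of $M/pM$ of the correct rank $n$, and embed it in $(R/p)^{\oplus n}$ with torsion cokernel $C$ having $\s(C)\subsetneq\s(R/p)$. Then $0\ra W\ra (R/p)^{\oplus n}\ra C\ra 0$ together with $C\in\A(M)$ (inductive hypothesis) and extension-closure would yield $(R/p)^{\oplus n}\in\A(M)$, whence $R/p\in\A(M)$ by closure under retracts. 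The delicate point — and the step I expect to require the most care — is placing the auxiliary same-support module $W$ into $\A(M)$ rather than merely into its own acyclic class $\A(W)$; I would control this by inducting on $\mathrm{length}_{R_p}(M_p)$, arranging at each stage that what drops either in support (absorbed by the primary induction) or in generic length, so that the bootstrapping closes.
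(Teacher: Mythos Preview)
Your reduction via a prime filtration and Noetherian induction on $V(p)$ is sound up to the point you flag, and the base case for maximal $p$ is correct. The gap is exactly where you place it: showing $\ker g\in\A(M)$ (equivalently, showing your auxiliary quotient $W$ lies in $\A(M)$). This step is not merely delicate; as stated the argument is circular. The inductive hypothesis of your Noetherian induction covers only finitely generated modules with support strictly inside $V(p)$, but $\ker g$ is a submodule of $M$ and its support can be all of $\s(M)$, which in general contains minimal primes lying outside $V(p)$ entirely. The secondary induction on $\mathrm{length}_{R_p}(M_p)$ does not rescue this: $M$ is fixed throughout, so that length never changes; and if you instead track the generic length of the successive kernels, what you learn inductively is a statement of the form $R/p\in\A(K)$ for a smaller $K$, not $K\in\A(M)$. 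Even in the best case $\mathrm{length}_{R_p}(M_p)=1$, where $g$ can be chosen an isomorphism at $p$, one only gets $p\notin\s(\ker g)$; the other primes of $\s(M)$ remain, and neither induction touches them.

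The paper avoids this trap by abandoning the prime-by-prime strategy and working instead with the single ideal $I_0=\a(M)$. The key observation is Lemma~\ref{lemma:annihilator}: any submodule of $M$ is an $R/I_0$-module, hence automatically $R/I_0$-cellular. Thus in the triangle $M\to R/I_i\to C_i\to\Sigma M$ the troublesome term $H_1C_i\subset M$ costs only a copy of $\Sigma R/I_0$, and an ascending-chain argument on the cokernels gives $R/I_0>M\oplus\Sigma R/I_0$. Iterating yields $R/I_0>M\oplus\Sigma^n R/I_0$ for every $n$; Lemma~\ref{lemma:suspension} supplies an $n$ with $\Sigma^n R/I_0>M$, and the loop closes. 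This suspension-and-absorb trick is precisely the missing ingredient: it converts ``submodule of $M$'' into something with a uniform cellular bound independent of the particular prime, which your scheme never provides.
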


Before proving this theorem we state and prove a corollary concerning cellularity.

\begin{corollary} \label{cor:modules}
Let $M$ and $N$ be finitely generated $R$-modules. Then $N$ is $M$-cellular if and only if $N$ is a quotient of a sum of $M$.
\end{corollary}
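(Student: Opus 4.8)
The plan is to prove the two implications separately, observing that the forward direction has essentially already been recorded in Section \ref{section:cellular}.

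First I would dispose of the ``only if'' direction. Recall that the collection of chain complexes with vanishing negative homology whose degree-zero homology is a quotient of a sum of $M$ is a cellular class containing $M$. Hence $\C(M)$ is contained in it, so any $M$-cellular module $N$, being concentrated in degree zero, has $N = H_0(N)$ a quotient of a sum of $M$. This direction requires no new work.

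For the ``if'' direction, suppose $N$ is a quotient of a sum $\oplus_I M$. First I would reduce to a finite sum: since $N$ is finitely generated, each of finitely many generators lifts to an element of $\oplus_I M$ supported on finitely many coordinates, so the restriction to a finite sub-sum $M^s \to N$ is already surjective. This reduction is essential, since it is what guarantees the kernel is finitely generated. Indeed, writing $K$ for the kernel of a surjection $f\colon M^s \to N$, the module $K$ is a submodule of the Noetherian module $M^s$, hence finitely generated, and as a submodule it satisfies $\s(K) \subset \s(M^s) = \s(M)$.

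The crux is to convert the surjection into a cellularity statement. Cellular classes are not closed under quotients in general (Example \ref{example:notcellular}), so the argument must exploit the kernel. The short exact sequence $0 \to K \to M^s \to N \to 0$ yields an exact triangle $K \to M^s \to N \to \Sigma K$ in $D(R)$, in which $K$ is the fiber of $f$. Here $M^s \gg M$ since cellular classes are closed under sums, and by Theorem \ref{theorem:modules} the support containment $\s(K)\subset\s(M)$ gives $K > M$. Applying Proposition \ref{prop:acycliccell} to this triangle with $X = M$ then converts the acyclicity of the fiber $K$ together with the cellularity of $M^s$ into the desired conclusion $N \gg M$. The main obstacle I expect is precisely this bridging step: there is no direct route from ``quotient of a sum of $M$'' to ``$M$-cellular,'' and the only available leverage is Proposition \ref{prop:acycliccell}, which demands that the kernel be $M$-acyclic. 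Establishing $K > M$ in turn rests on Theorem \ref{theorem:modules}, whose proof is deferred, so the corollary is logically downstream of the theorem even though it is stated first; and the reduction to a finite sum is the one genuinely load-bearing technical point, since without finite generation of $K$ the theorem would not apply.
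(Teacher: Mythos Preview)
Your proposal is correct and follows essentially the same approach as the paper: reduce to a surjection from a finite sum of $M$, take the fiber $K$ (a finitely generated submodule of $M^s$), invoke Theorem~\ref{theorem:modules} to get $K>M$ from the support containment, and then apply Proposition~\ref{prop:acycliccell} to conclude $N\gg M$. The only cosmetic difference is that the paper further replaces $M^s$ by $M$ (since $\C(M^s)=\C(M)$) before running the argument, but this changes nothing of substance.
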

\begin{proof}
The condition is necessary (see section \ref{section:cellular}). 

Suppose that there is a surjection from a sum of $M$ onto $N$. Since $N$ is finitely generated we may suppose that this sum is finite. In fact we can assume that we have a surjection from $M$ onto $N$ since $M$ and any sum of $M$ generate the same cellular class.

We need to show that given two finitely generated modules $M$ and $N$ such that there is a surjection from $M$ onto $N$, then $N$ is $M$-cellular. Fix such a surjection $f$ and complete it into a triangle:
\[
\xymatrix{C \ar[r] & M \ar^f[r] & N \ar[r] & \Sigma C} 
\]
By Proposition \ref{prop:acycliccell} it is enough to show that $C$ is $M$-acyclic. The map $f$ is a surjection so from the associated long exact sequence of homology modules we see that $C$ is isomorphic to a finitely generated module. By Theorem \ref{theorem:modules} proving that the module $C$ is $M$-acyclic is equivalent to showing that the support of $C$ is contained in the support of $M$.  From the same long exact sequence we see that $C$ is actually a submodule of $M$ so its support is contained in that of $M$ and we are done.
\end{proof}

We begin the proof of \ref{theorem:modules} by establishing three lemmas.

\begin{lemma} \label{lemma:annihilator}
Let $M$ be any $R$-module and $I \subset R$ an ideal. Then $M$ is $R/I$-cellular if and only if $M$ is an $R/I$-module.
\end{lemma}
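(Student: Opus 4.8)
The plan is to prove the two implications separately, handling the \emph{only if} direction by quoting the necessary condition for cellularity already isolated in Section~\ref{section:cellular}, and the \emph{if} direction by an explicit resolution-and-homotopy-colimit construction modeled on the computation of $\C(R)$ given there.

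For the \emph{only if} direction, suppose $M$ is $R/I$-cellular. Recall from Section~\ref{section:cellular} that the collection of complexes with vanishing negative homology whose $H_0$ is a quotient of a sum of $R/I$ is a cellular class containing $R/I$; hence every $R/I$-cellular complex $X$ has $H_0(X)$ a quotient of a direct sum of copies of $R/I$. Applying this to $X = M$, which is concentrated in degree $0$ so that $H_0(M) = M$, I conclude that $M$ is a quotient of a direct sum of copies of $R/I$. Since the category of $R/I$-modules is closed under arbitrary direct sums and under quotients, $M$ is an $R/I$-module, as required. It is essential here that $M$ sits in degree $0$: the class $\C(R/I)$ is closed under suspension but not under desuspension, and indeed over $R=\Z$ with $I=(2)$ the object $\Sigma\, \Z/(4)$ is $\Z/(2)$-cellular although $\Z/(4)$ is not a $\Z/(2)$-module.

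For the \emph{if} direction, suppose $M$ is an $R/I$-module. I choose a free resolution of $M$ \emph{over the ring} $R/I$, obtaining a non-negative chain complex $F$ with each $F_n = \oplus_{J_n} R/I$ a direct sum of copies of $R/I$, together with a quasi-isomorphism $F \ra M$. Restricting scalars, $F$ is a non-negative complex of $R$-modules isomorphic to $M$ in $D(R)$, and each $F_n$, being a direct sum of copies of $R/I$, is $R/I$-cellular. Now I realize $F$ exactly as in the computation of $\C(R)$ in Section~\ref{section:cellular}: write $F$ as the homotopy colimit of complexes $X^k$, where $X^0 = F_0$ and $X^{k+1}$ is the cone of $-\delta_{k+1}\colon \Sigma^k F_{k+1} \ra X^k$. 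Each $\Sigma^k F_{k+1}$ is a suspension of a sum of copies of $R/I$, hence $R/I$-cellular, so by induction every $X^k$ is $R/I$-cellular. Since cellular classes are closed under directed homotopy colimits, $F \cong M$ is $R/I$-cellular.

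The only genuinely delicate point is in the \emph{if} direction: one must resolve $M$ over $R/I$ rather than over $R$, so that the building blocks are sums of $R/I$ (which are $R/I$-cellular) instead of free $R$-modules; once this is arranged, the homotopy-colimit argument used earlier for $\C(R)$ applies verbatim, now with $\Sigma^k F_{k+1}$ in place of $\Sigma^k X_{k+1}$. The \emph{only if} direction is immediate given the necessary condition recorded in Section~\ref{section:cellular}, so together the two implications prove the lemma.
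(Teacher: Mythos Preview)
Your proof is correct and follows essentially the same approach as the paper: the \emph{only if} direction is dispatched by the necessary condition from Section~\ref{section:cellular}, and the \emph{if} direction resolves $M$ over $R/I$ and observes that such a resolution lies in $\C(R/I)$. The paper is terser, simply asserting that a projective $R/I$-resolution $F$ of $M$ satisfies $F\in\C(R/I)$, whereas you spell out the homotopy-colimit argument explicitly; the content is the same.
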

\begin{proof}
The condition is necessary (see section \ref{section:cellular}). 

To prove that it is also sufficient we let $F$ denote any projective resolution of $M$ as an $R/I$-module. It is clear that $F \in C(R/I)$. Since $M$ and $F$ are isomorphic, $M$ must also be $R/I$-cellular. 
\end{proof}   

A trivial consequence of the lemma is that $M$ is $R/\a(M)$-acyclic. The key point in the proof of Theorem \ref{theorem:modules} is that these two modules actually generate the same acyclic class if $M$ is finitely generated. The following lemma is a tool when dealing with finite sums of cyclic modules.

\begin{lemma} \label{lemma:sums}
Let $I$ and $J$ be two ideals. The following three modules generate the same acyclic class:
\begin{enumerate}
\item $R / J \oplus R / I$
\item $R / (I J)$
\item $R / (I \cap J)$
\end{enumerate}
\end{lemma}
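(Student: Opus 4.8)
The plan is to show pairwise that these three modules generate the same acyclic class, using the characterization of acyclic classes of finitely generated modules that will follow from Theorem \ref{theorem:modules}: two finitely generated modules generate the same acyclic class precisely when they have the same support. However, since Lemma \ref{lemma:sums} is one of the three lemmas being established \emph{in order to prove} Theorem \ref{theorem:modules}, I cannot invoke that theorem here; I must argue directly from the definition of acyclic classes, using closure under sums, cones, and extensions. So the honest approach is to build each module out of the others by exact triangles.

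First I would record the supports: $\s(R/(IJ)) = \s(R/(I\cap J)) = V(I)\cup V(J) = \s(R/I)\cup\s(R/J) = \s(R/I\oplus R/J)$, since $IJ$ and $I\cap J$ have the same radical and $V(IJ)=V(I)\cup V(J)$. This confirms the three modules ought to generate the same class and tells me what triangles to look for. To show (2) and (3) generate the same class, I would use the surjection $R/(IJ)\twoheadrightarrow R/(I\cap J)$ (since $IJ\subseteq I\cap J$); its kernel $(I\cap J)/(IJ)$ is a submodule of $R/(IJ)$, so by the long exact homology sequence applied to the cone, combined with closure under extensions, each generates the other's class — this mirrors exactly the mechanism used in the proof of Corollary \ref{cor:modules}.

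For the comparison between (1) and (2)/(3), the main tool is the short exact sequence of $R$-modules
\[
\xymatrix{0 \ar[r] & R/(I\cap J) \ar[r] & R/I \oplus R/J \ar[r] & R/(I+J) \ar[r] & 0}
\]
(the Chinese-remainder / Mayer--Vietoris sequence for ideals), which in $D(R)$ gives an exact triangle with $R/(I\cap J)$ and $R/(I+J)$ at the ends and $R/I\oplus R/J$ in the middle. Since $I\cap J\subseteq I+J$ forces $V(I+J)\subseteq V(I\cap J)$, the module $R/(I+J)$ is a quotient of $R/(I\cap J)$, hence built from it by the same cone-and-extension argument, so $R/(I+J)$ lies in the acyclic class generated by $R/(I\cap J)$. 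Closure under extensions then places $R/I\oplus R/J$ in that class. Conversely, from the same short exact sequence the summands $R/I$ and $R/J$ are quotients of $R/(I\cap J)$ (as $I\cap J\subseteq I$ and $I\cap J\subseteq J$), so $R/I\oplus R/J$ also sits in the class generated by $R/(I\cap J)$, and I would run the reverse inclusion by exhibiting $R/(I\cap J)$ as built from the summands via the displayed triangle together with closure under sums and cones.

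The step I expect to be the main obstacle is getting the reverse inclusions to go through \emph{without} Theorem \ref{theorem:modules}, i.e.\ showing directly that $R/(I\cap J)$ is acyclic to $R/I\oplus R/J$ rather than merely the other direction that closure under extensions hands me for free. The subtlety is that acyclic classes are closed under extensions (the middle term from the two ends), so the Mayer--Vietoris triangle immediately gives the middle $R/I\oplus R/J$ from the ends, but to recover an \emph{end} term from the middle I must instead use cones: completing the map $R/(I\cap J)\to R/I\oplus R/J$ to a triangle whose cone is (a shift of) $R/(I+J)$, and then independently establishing that $R/(I+J)$ belongs to the relevant class via the quotient-map-and-submodule argument. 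I would therefore organize the proof so that every ``recovering an end'' step is reduced to a ``quotient plus its kernel-submodule'' situation, exactly as in Corollary \ref{cor:modules}, since there both the quotient and the kernel's support are controlled and closure under cones and extensions suffices. Keeping careful track of which closure property (cone versus extension) is legitimately available at each step is the delicate bookkeeping that makes or breaks the argument.
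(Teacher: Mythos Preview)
Your plan has a genuine gap, and it is precisely the step you yourself flagged as the obstacle: recovering $R/(I\cap J)$ (equivalently $R/(IJ)$) from $R/I\oplus R/J$.

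First, the mechanism you keep invoking --- ``quotient plus its kernel-submodule, exactly as in Corollary~\ref{cor:modules}'' --- is not available to you. The proof of Corollary~\ref{cor:modules} uses Theorem~\ref{theorem:modules} to conclude that a submodule (same support) is acyclic to the ambient module; you have correctly noted that Theorem~\ref{theorem:modules} is off-limits here, so you cannot import its consequences through the back door. Knowing that $(I\cap J)/(IJ)$ is a submodule of $R/(IJ)$, or that the kernel of your Mayer--Vietoris surjection sits inside the middle term, buys you nothing by itself.

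Second, even granting that $R/(I+J)$ lies in $\A(R/I\oplus R/J)$, your Mayer--Vietoris triangle
\[
R/(I\cap J)\;\longrightarrow\; R/I\oplus R/J\;\longrightarrow\; R/(I+J)\;\longrightarrow\;\Sigma R/(I\cap J)
\]
places $R/(I\cap J)$ as the \emph{fiber}. Cones give you only $\Sigma R/(I\cap J)$, and closure under extensions would require $\Sigma^{-1}R/(I+J)$ to already lie in the class, which you have no way to arrange. Acyclic classes are not closed under desuspension, so you are stuck one suspension too high.

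The paper avoids both problems by using Lemma~\ref{lemma:annihilator}: any module annihilated by an ideal $K$ is $R/K$-cellular. Since $IJ\subseteq I\cap J\subseteq I,J$, this single observation makes $R/I$, $R/J$, and $R/(I\cap J)$ all $R/(IJ)$-cellular, disposing of every direction except $R/(IJ)>R/I\oplus R/J$. For that, the paper chooses the short exact sequence
\[
0\;\longrightarrow\; I/(IJ)\;\longrightarrow\; R/(IJ)\;\longrightarrow\; R/I\;\longrightarrow\;0,
\]
in which $R/(IJ)$ is the \emph{middle} term. The kernel $I/(IJ)$ is annihilated by $J$, hence $R/J$-cellular by Lemma~\ref{lemma:annihilator} again, and closure under extensions finishes immediately. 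The choice of exact sequence matters: this one puts the target in the extension slot, where the closure property you actually have applies.
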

\begin{proof}
It follows from Lemma \ref{lemma:annihilator} that the only thing we need to show is that $R/(I J)$ is $R/I \oplus R/J$-acyclic.

We write $R / (I J)$ as an extension:
\[
\xymatrix{I / (I J) \ar[r] & R / (I J) \ar[r] & R / I \ar[r] & \Sigma I/(IJ)}
\]
The module $I / (I J)$ is annihilated by $J$ so it is  $R/J$-cellular, in particular $R/I \oplus R/J$-acyclic.

Acyclic classes are closed under extensions so $R/(IJ)$ is $R/I \oplus R/J$-acyclic and we are done. 
\end{proof}

\begin{lemma} \label{lemma:suspension}
Suppose that $M$ is a finitely generated module and let $I$ denote its annihilator. Then for some $k$, $\Sigma^k R/I$ is $M$-acyclic.
\end{lemma}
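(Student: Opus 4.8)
The plan is to induct on the minimal number $n$ of generators of $M$, reducing at each stage by splitting off one generator as a cyclic quotient and then rebuilding $R/I$ from the pieces via Lemma \ref{lemma:sums}. If $n\leq 1$ then $M$ is cyclic, $M\cong R/I$, and $k=0$ works. For the inductive step I would write $M$ with generators $m_1,\dots,m_n$, set $M'=\langle m_1,\dots,m_{n-1}\rangle$ (generated by at most $n-1$ elements), and let $R/p\cong M/M'$ be the resulting cyclic quotient, where $p=\a(M/M')$. Put $I'=\a(M')$ and $I=\a(M)$. The arithmetic fact driving everything is that $pM\subseteq M'$, whence $I'pM\subseteq I'M'=0$, so $I'p\subseteq I$ (and clearly $I\subseteq I'\cap p$); this inclusion is exactly what lets Lemma \ref{lemma:sums} reassemble $R/I$.

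I would then run the following chain. First, the surjection $M\ra R/p$ induces an epimorphism on homology, so Lemma \ref{lemma:principal} gives $R/p\gg M$, hence $R/p>M$. Second, $\Sigma M'$ is the cone of $M\ra R/p$, and since acyclic classes are in particular cellular (closed under cones), $M,R/p\in\A(M)$ forces $\Sigma M'\in\A(M)$. Third, the induction hypothesis applied to $M'$ gives $\Sigma^{a}R/I'>M'$ for some $a$; applying the suspension autoequivalence transports this to $\Sigma^{a+1}R/I'\in\A(\Sigma M')\subseteq\A(M)$, and suspending $R/p>M$ gives $\Sigma^{a+1}R/p>M$ as well. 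Fourth, closure under sums yields $\Sigma^{a+1}(R/I'\oplus R/p)>M$, and the suspended form of Lemma \ref{lemma:sums} identifies this acyclic class with that of $\Sigma^{a+1}R/(I'p)$, so $\Sigma^{a+1}R/(I'p)\in\A(M)$. Finally, because $I'p\subseteq I$ the module $R/I$ is a quotient of $R/(I'p)$, hence $R/(I'p)$-cellular by Lemma \ref{lemma:annihilator}; suspending, $\Sigma^{a+1}R/I\in\C(\Sigma^{a+1}R/(I'p))\subseteq\A(M)$, which is the assertion with $k=a+1$.

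The step I expect to require the most care is the bookkeeping of suspensions. Acyclic classes are closed under suspension but \emph{not} under desuspension, so one must never strip a $\Sigma$; the argument is arranged so that every passage is an upward suspension, using the identity $\Sigma\A(Z)=\A(\Sigma Z)$ — valid because $\Sigma$ is an exact autoequivalence preserving sums, cones and extensions — to move the induction hypothesis from $\A(M')$ into $\A(\Sigma M')$. The only other thing to check is that Lemma \ref{lemma:sums} survives suspension, which is immediate since its proof uses only extensions and cellularity, both stable under $\Sigma$. The ideal inclusion $I'p\subseteq I$ is the algebraic heart of the step, as it is what makes the reconstruction of $R/I$ through Lemma \ref{lemma:sums} and Lemma \ref{lemma:annihilator} possible.
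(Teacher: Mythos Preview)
Your proof is correct and follows essentially the same strategy as the paper: filter $M$ by cyclic quotients, use Lemma~\ref{lemma:principal} to make each cyclic quotient cellular, climb the filtration to pick up suspensions, and then invoke Lemma~\ref{lemma:sums} to recombine. The only real difference is in the ideal bookkeeping: the paper runs through the whole filtration, observes that the first generator can be taken to be any $m_j$, so obtains $\Sigma^{n-1}R/\a(m_j)\gg M$ for every $j$ simultaneously, and then finishes with the exact equality $\cap_j\a(m_j)=\a(M)$; you instead organize this as an induction that tracks $I'=\a(M')$, land on $R/(I'p)$, and need the extra step $I'p\subseteq I$ together with Lemma~\ref{lemma:annihilator} to pass to $R/I$. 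Both routes yield the same exponent $k=n-1$, and your handling of the suspensions (in particular the identity $\Sigma\A(Z)=\A(\Sigma Z)$ and the observation that Lemma~\ref{lemma:sums} is stable under $\Sigma$) is correct and carefully stated.
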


\begin{proof}
We fix a finite set $\{m_1, \ldots, m_n\}$ of generators of $M$. Let $M_i \subset M$ denote the module generated by $\{m_1, \ldots, m_i\}$. Each quotient $M_i/M_{i-1}$ is isomorphic to $R/I_i$ for some ideal $I_i$. From Lemma \ref{lemma:principal} we conclude that $R/I_i$ is $M_i$-cellular for each $i$. This result together with the triangles
\[
\xymatrix{M_i \ar[r] & R/I_i \ar[r] & \Sigma M_{i-1} \ar[r] & \Sigma M_i}
\]
imply that for all $0<i \leq n$, $\Sigma M_{i-1}$ is $M_i$-cellular.

Inductively we conclude that $\Sigma^{n-1} M_1 \gg M$. However $M_1$ is isomorphic to $R/\a(m_1)$. The choice of $m_1$ was arbitrary so 
\[
\oplus_{0 \leq i \leq n} \Sigma^{n-1} R/\a(m_i) \gg M
\]

In light of Lemma \ref{lemma:sums}, $\Sigma^k R/ (\cap \a(m_i)) > M$. The equality \[\cap \a(m_i) = \a(M)\] completes the proof.
\end{proof}

\begin{proof}[Proof of Theorem \ref{theorem:modules}]
Fix two finitely generated modules $M$ and $N$. If $N$ is $M$-acyclic then the support of $N$ must be a subset of the support of $M$ as we already noted. Therefor the condition is necessary. 

To show that it is also sufficient our strategy is as follows; we first show that $R/\a(M)$ is $M$-acyclic. It then follows that ``being $M$-acyclic'' is closed under taking quotients of finitely generated modules. Finally we show that $N > M$.

Let $I_0$ denote the annihilator of $M$. We claim that $R/I_0$ is $M$-acyclic. Since $\s(R/I_0) = \s(M)$ there is a non-zero map $f_0$ from $M$ to $R/I_0$. If $f_0$ is not surjective then its cokernel is some module $R/I_1$ where $I_0 \subset I_1$. Note that $\s(R/I_1) \subset \s(M)$ so there is some non-zero map $f_1$ from $M$ to $R/I_1$. Inductively this defines a sequence of ideals
\[
\a(M) = I_0 \subset I_1 \subset I_2 \subset \ldots
\]
and a family of maps $\xymatrix{f_i:M \ar[r] & R/I_i}$ such that the cokernel of $f_i$ is equal to $R/I_{i+1}$.

The ring $R$ is Noetherian so for some $k$, $I_{k+1} = I_{k+2} = \ldots$. This is only possible if $f_k$ is surjective. From Lemma \ref{lemma:principal} it follows that $R/I_k$ is $M$-cellular.

For all $i$ we let $C_i$ denote the cone of $f_i$. This is a chain complex concentrated in degrees 0 and 1, moreover $H_0 C_i = R/I_{i+1}$. Its first homology is a submodule of $M$. It is therefor annihilated by $\a(M) = I_0$. By Lemma \ref{lemma:annihilator} $H_1 C_i$ is $R/I_0$-cellular for all $i$.

We next consider the following exact triangles:
\[
\begin{array}{cccccccc}
M & \rightarrow & R/I_i & \rightarrow & C_i & \rightarrow & \Sigma M & (1) \\
\Sigma H_1 C_i & \rightarrow & C_i & \rightarrow & R/I_{i+1} & \rightarrow & \Sigma^2 H_1 C_i & (2)
\end{array}
\]
We noted above that there is some $k$ such that $R/I_k$ is $M$-cellular, in particular $M$-acyclic. We also noted that $H_1 C_i$ is $R/I_0$-cellular for all $i$. It follows from $(2)$ with $i = k-1$ that $C_{k-1}$ is $M \oplus \Sigma R/I_0$-acyclic. This result in (1) with $i = k-1$ implies that $R/I_{k-1}$ is $M \oplus \Sigma R/I_0$-acyclic. By induction it follows that 
\[
R/I_0 > M \oplus \Sigma R/I_0
\]
Since the relation $>$ is transitive and preserved by suspensions (see section \ref{section:acyclic}) we get that for any $n \geq 0$:
\[
R/I_0 > M \oplus \Sigma^n R/I_0
\]
By Lemma \ref{lemma:suspension} there is some $k$ such that $\Sigma^k R/I_0$ is $M$-acyclic. Hence $R/I_0$ is $M$-acyclic. This proves the claim.

Suppose that $K$ is a quotient of a finitely generated module $L$. By the above we know that $R/\a(L) > L$. Since $K$ is a quotient of $L$ the annihilator of $K$ contains the annihilator of $L$. By lemma \ref{lemma:annihilator} $K \gg R/\a(L)$. This shows that $K > L$. In particular, if $L$ is $M$-acyclic then so is $K$. This shows that being $M$-acyclic is closed under taking quotients of finitely generated modules.  
 
We now assume that the support of the finitely generated module $N$ is contained in the support of $M$. We wish to show that $N$ is $M$-acyclic.

We say that a module $K$ is generated by $M$ if there is a surjection from a sum of $M$ onto $K$. Let $K_0 \subset N$ denote the largest submodule of $N$ generated by $M$, this is the image of the evaluation map Hom$(M,N) \otimes_R M \rightarrow N$. The module $K_0$ is non-zero since there are non-zero maps from $M$ to $N$. Since $K_0$ is a quotient of a sum of $M$ it is $M$-acyclic. We are done if $K_0 = N$. If not let $L_1$ denote the quotient $N / K_0$. Define inductively $K_i \subset L_i$ as the largest submodule generated by $M$. They are all non-zero as long as $L_i$ is non-zero. Let $L_{i+1}$ be the quotient $L_i / K_i$. Since $N$ is a finitely generated module there is some $k$ such that $K_k \cong L_k$. 

As quotients of a finite sum of $M$, all the $K_i$'s are all $M$-acyclic. Since $K_k \cong L_k$, $L_k > M$. Now it follows by induction that $N$ is $M-$acyclic. 

This concludes the proof of the theorem.

\end{proof}

\section{Cellularity and Acyclicity of Chain Complexes}

The main result of this section is that if $X$ is a finite chain complex then $X$ builds its homology: $\Sigma^k H_k X$ is $X$-cellular for every $k$. One consequence of this theorem is that we can determine the relation $>$ between all finite chain complexes. It also implies that cellular classes are closed under homological epimorphisms between finite chain complexes. We conclude this section by reproving a classification by Stanley (\cite{Staney}) of all acyclic classes in $D^f(R)$.

Recall that a chain complex $X$ is called finite if the module $\oplus_{i \in \Z} H_i X$ is finitely generated. The full subcategory of $D(R)$ generated by all finite chain complexes is denoted by $D^f(R)$. 

We begin by showing that any finite chain complex is in the acyclic class generated by its homology.

\begin{proposition} \label{lemma:homology}
Let $X$ be any finite chain complex. Then
\[
X > \oplus_{i \in \Z} \Sigma^i H_i X
\]
\end{proposition}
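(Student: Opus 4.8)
The plan is to induct on the number of degrees $i$ for which $H_i X \neq 0$. Since $X$ is finite, $\oplus_i H_i X$ is finitely generated, so only finitely many $H_i X$ are nonzero and each is finitely generated; hence this number is finite and the induction is well-founded. Write $H = \oplus_{i \in \Z}\Sigma^i H_i X$ for the target complex. The base case, where $X$ has homology concentrated in a single degree $k$, is immediate: such an $X$ is isomorphic in $D(R)$ to $\Sigma^k H_k X = H$, and cellular (hence acyclic) classes are closed under isomorphism, so $X > H$.

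For the inductive step I let $k$ be the smallest degree with $H_k X \neq 0$. Since $H_i X = 0$ for $i < k$, the construction recalled at the beginning of Section \ref{section:support} provides a morphism $f : X \ra \Sigma^k H_k X$ in $D(R)$ that induces an isomorphism on $H_k$. I complete $f$ to an exact triangle and pass to its fiber $F$, obtaining
\[
\xymatrix{F \ar[r] & X \ar^{f}[r] & \Sigma^k H_k X \ar[r] & \Sigma F}
\]
A routine chase of the associated long exact homology sequence shows that $H_i F \cong H_i X$ for $i > k$, while $H_i F = 0$ for $i \leq k$ (the isomorphism on $H_k$ kills $H_k F$ and forces $H_{k-1} F = 0$). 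Thus $F$ is again finite and has strictly fewer nonzero homology groups than $X$, so the induction hypothesis yields $F > \oplus_i \Sigma^i H_i F = \oplus_{i>k}\Sigma^i H_i X$.

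The final step reassembles the two pieces. Both $\oplus_{i>k}\Sigma^i H_i X$ and $\Sigma^k H_k X$ are direct summands of $H$; since $H \in \A(H)$ and acyclic classes are closed under retracts, both summands lie in $\A(H)$, i.e. $\oplus_{i>k}\Sigma^i H_i X > H$ and $\Sigma^k H_k X > H$. Transitivity of $>$ then gives $F > H$. Now the displayed triangle has the shape $F \ra X \ra \Sigma^k H_k X \ra \Sigma F$ with both outer terms in $\A(H)$, so it exhibits $X$ as an extension of objects of $\A(H)$. As $\A(H)$ is an acyclic class it is closed under extensions, whence $X \in \A(H)$, that is $X > H$, completing the induction.

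I expect the only genuinely delicate points to be the homology computation for the fiber $F$ and — more conceptually — the recognition that the reassembly must proceed through closure under \emph{extensions} rather than cones. This is exactly why the statement concerns the acyclic relation $>$ and not the cellular relation $\gg$: a cellular class is only guaranteed closed under cones, and the triangle above is not of the right form to conclude $X \gg H$ from cones alone. Using acyclicity is therefore essential, and the strategy of peeling off the bottom homology is precisely what upgrades the elementary map $X \ra \Sigma^k H_k X$ into the full statement.
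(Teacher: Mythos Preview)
Your proof is correct and follows essentially the same idea as the paper's: both arguments show that $X$ is an iterated extension of the shifted homology modules $\Sigma^i H_i X$ and then invoke closure of acyclic classes under extensions. The paper packages this as a single finite filtration $\ldots \subset F_i X \subset F_{i-1} X \subset \ldots \subset X$ with successive quotients $\Sigma^i H_i X$, whereas you unwind that filtration into an explicit induction, peeling off the bottom homology via the map $X \to \Sigma^k H_k X$ at each stage; the two presentations are equivalent.
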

\begin{proof}
Any finite chain complex is isomorphic to a bounded chain complex. Suppose that $X$ is bounded. There is a finite filtration of $X$:
\[
\ldots \subset F_i X \subset F_{i-1} X \subset \ldots X
\]
such that each quotient is isomorphic to the shifted module:
\[
F_i X /F_{i-1} X \cong \Sigma^i H_i X
\]
In other words $X$ can be written as an extension of its homology modules. Acyclic classes are closed under extensions so the lemma follows.
\end{proof}

We now state the main result of this paper. 

\begin{theorem} \label{theorem:main}
Let $X$ be a finite chain complex. Then for every $k$, the chain complex $\Sigma^k H_k X$ is $X$-cellular.
\end{theorem}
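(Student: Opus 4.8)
The plan is to reduce first to the case of a bounded complex (every finite complex is isomorphic in $D(R)$ to one) and then induct on $\ell(X)$, the number of degrees $i$ with $H_i X \neq 0$. When $\ell(X)\le 1$ the complex $X$ is isomorphic to a single shifted module $\Sigma^a H_a X$ and the statement is trivial. For the inductive step I would peel off the \emph{top} homology. Let $b$ be the highest degree with $H_b X\neq 0$; representing $X$ by a complex concentrated in degrees $\le b$, the submodule of top-degree cycles yields a map $\iota\colon\Sigma^b H_b X\to X$ that is an isomorphism on $H_b$. Let $C$ be its cone; a Mayer--Vietoris/long exact sequence computation shows $H_i C\cong H_i X$ for $i<b$ and $H_i C=0$ for $i\ge b$, so $\ell(C)=\ell(X)-1$ and the inductive hypothesis applies to $C$.

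The routine half of the argument handles every degree below the top. Suppose we already know that $\Sigma^b H_b X$ is $X$-acyclic. Then Proposition \ref{prop:acycliccell}, applied to the triangle $\Sigma^b H_b X\to X\to C\to\Sigma^{b+1}H_b X$ (with $Y=\Sigma^b H_b X$, $Z=X$, $W=C$, using $X\gg X$), gives $C\gg X$. By the inductive hypothesis $\Sigma^i H_i C\gg C$ for every $i$, and since $H_i C=H_i X$ for $i<b$, transitivity of $\gg$ yields $\Sigma^i H_i X\gg C\gg X$ for all $i<b$. The reason for peeling the \emph{top} rather than the bottom is precisely that here the cone $C$ carries the lower homology and sits with a clean relation $C\gg X$; had I peeled the bottom, the relevant complex would appear only as a \emph{fiber}, forcing an unremovable suspension on the upper homology (one obtains $\Sigma^{i+1}H_i X\gg X$ rather than $\Sigma^i H_i X\gg X$), and cellular and acyclic classes are not closed under desuspension, so this shift cannot be corrected. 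Every triangle rotation in the proof must keep the targeted homology module in nonnegative suspension for exactly this reason.

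This leaves the genuine crux: showing that the single module $\Sigma^b H_b X$ is $X$-acyclic and $X$-cellular. For the \emph{bottom} homology this is comparatively easy, and illustrates the mechanism: the canonical quotient $X\to\Sigma^a H_a X$ is surjective on all homology, so composing with any cyclic quotient of $H_a X$ and applying Lemma \ref{lemma:principal} shows each $\Sigma^a R/I\gg X$; filtering $H_a X$ by cyclic subquotients and combining with Proposition \ref{prop:acycliccell} then yields $\Sigma^a H_a X\gg X$. The top degree, however, admits no such surjection out of $X$, since $H_b X$ sits inside $X$ as cycles rather than as a quotient. Here the tool is Proposition \ref{prop:useful}: for a prime $p\in\s(H_b X)\setminus\s\big(\oplus_{i<b}H_i X\big)$ it manufactures a map $X\to\Sigma^b H_b X$ that becomes an isomorphism on $H_b$ after localizing at $p$. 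I would compose this with cyclic quotients of $H_b X$ supported at such $p$, extract those pieces via Lemma \ref{lemma:principal}, invoke Theorem \ref{theorem:modules} to control the remaining support, and run a Noetherian induction on $\s(H_b X)$, peeling off one "new'' prime at a time. The \textbf{main obstacle} is exactly the degenerate situation $\s(H_b X)\subset\s\big(\oplus_{i<b}H_i X\big)$, where no such prime $p$ exists and Proposition \ref{prop:useful} yields nothing directly; in that regime $H_b X$ must instead be built cellularly out of the lower homology (whose support already dominates it) using the module-level results, all while respecting the no-desuspension constraint noted above. I expect this absorbed-support case to be where the real work lies.
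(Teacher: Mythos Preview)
Your identification of the crux is correct: everything reduces to showing $\Sigma^b H_b X \gg X$ for the top degree $b$. But the gap is not confined to the ``absorbed-support'' case you flag at the end; it is present uniformly. In the favorable case (some $p\in\s(H_b X)\setminus\s(\oplus_{i<b}H_i X)$), your plan of composing the map from Proposition~\ref{prop:useful} with cyclic quotients and invoking Lemma~\ref{lemma:principal} can extract cyclic pieces $\Sigma^b R/I\gg X$, but assembling these into $\Sigma^b H_b X$ requires extensions, which gives only $\Sigma^b H_b X > X$ and hence, via Corollary~\ref{cor:suspacyclic}, only $\Sigma^{b+1}H_b X\gg X$. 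In the degenerate case you propose building $H_b X$ from the lower homology, but the lower homology is at that point only known to satisfy $\Sigma^i H_i X\gg C$, and $C$ is the cone of $\Sigma^b H_b X\to X$, hence cellular only over $X\oplus\Sigma^b H_b X$, not over $X$ alone; so again you obtain at best a relation of the shape $\Sigma^b H_b X > X\oplus\Sigma^{b+1}H_b X$, one suspension short. Your own remark that cellular classes are not closed under desuspension shows this surplus suspension cannot simply be discarded.

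The paper supplies the missing mechanism in two parts you do not have. First, Lemma~\ref{lemma:preliminary} proves unconditionally that $\Sigma^{n_k}H_k X\gg X$ for some (possibly large) $n_k\ge k$; this is an independent argument and serves as a landing pad. Second --- and this is the real trick --- one does not use the inclusion $\Sigma^b H_b X\to X$ but rather the ``universal'' map $\bigoplus_I X\to\Sigma^n H_n X$ indexed by a finite generating set of $\mathrm{Hom}_{D(R)}(X,\Sigma^n H_n X)$. Analyzing its cone (with Proposition~\ref{prop:useful} used to show the cokernel of $\bigoplus_I H_n X\to H_n X$ has support in $\s(\oplus_{j<n}H_j X)$) yields a self-improving relation $\Sigma^n H_n X > X\oplus\Sigma^{n+1}H_n X\oplus(\text{higher terms})$, which one iterates by transitivity to $\Sigma^n H_n X > X\oplus\Sigma^{n+m}H_n X\oplus(\text{higher terms})$ for all $m\ge 1$, and then absorbs the $\Sigma^{n+m}$ term into $X$ using Lemma~\ref{lemma:preliminary}. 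The paper packages this as an induction on the auxiliary statement $\Sigma^i H_i X\gg X\oplus\bigoplus_{j>k}\Sigma^j H_j X$ for all $i\le k$; your peel-off-the-top induction on $\ell(X)$ in fact recovers exactly this statement for $k=b-1$, so your framework is compatible with the paper's, but the passage from $k=b-1$ to $k=b$ is precisely the suspension-reduction step your proposal lacks.
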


As a first corollary we see that the acyclic class of a finite chain complex depends only on its homology.

\begin{corollary} \label{cor:homologyonly}
If $X$ is a finite chain complex then:
\[
\A(X) = \A(\oplus_{i \in \Z} \Sigma^i H_i X)
\]
\end{corollary}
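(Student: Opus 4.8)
The plan is to prove the equality by establishing the two inclusions separately, using the general principle that for any two complexes $Y > X$ holds if and only if $\A(Y) \subseteq \A(X)$. Indeed, if $Y > X$ then $Y$ lies in the acyclic class $\A(X)$; since $\A(X)$ is then an acyclic class containing $Y$, it must contain the smallest such class, namely $\A(Y)$. Conversely $Y \in \A(Y) \subseteq \A(X)$ gives $Y > X$. Thus the desired identity $\A(X) = \A(\oplus_{i \in \Z} \Sigma^i H_i X)$ will follow once I verify both relations $X > \oplus_{i \in \Z} \Sigma^i H_i X$ and $\oplus_{i \in \Z} \Sigma^i H_i X > X$.

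The first relation is exactly the content of Proposition \ref{lemma:homology}, so the inclusion $\A(X) \subseteq \A(\oplus_{i \in \Z} \Sigma^i H_i X)$ is immediate. For the reverse inclusion I would show that $\oplus_{i \in \Z} \Sigma^i H_i X$ lies in $\A(X)$. By Theorem \ref{theorem:main} each summand $\Sigma^k H_k X$ is $X$-cellular, hence belongs to $\C(X) \subseteq \A(X)$. An acyclic class is in particular a cellular class, and therefore closed under arbitrary direct sums, so $\oplus_{i \in \Z} \Sigma^i H_i X \in \A(X)$, that is, $\oplus_{i \in \Z} \Sigma^i H_i X > X$. This yields $\A(\oplus_{i \in \Z} \Sigma^i H_i X) \subseteq \A(X)$ and, combined with the first inclusion, completes the argument.

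Since both of the substantial results (Proposition \ref{lemma:homology} and Theorem \ref{theorem:main}) are already available, there is no genuine obstacle here; the corollary is essentially a formal consequence. The only step deserving a moment's care is the passage from ``each $\Sigma^k H_k X \gg X$'' to ``$\oplus_{i \in \Z} \Sigma^i H_i X > X$'', which relies on the inclusion $\C(X) \subseteq \A(X)$ (cellularity implies acyclicity) together with the closure of acyclic classes under direct sums; note also that finiteness of $X$ makes the sum finite, though this is not strictly needed.
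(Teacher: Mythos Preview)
Your argument is correct and follows exactly the approach indicated in the paper, which simply says ``Combine Theorem \ref{theorem:main} with Proposition \ref{lemma:homology}.'' You have merely spelled out the details of this combination; in fact, since cellular classes are already closed under direct sums, Theorem \ref{theorem:main} gives $\oplus_i \Sigma^i H_i X \gg X$ directly, which is slightly stronger than what you wrote but not needed here.
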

\begin{proof}
Combine Theorem \ref{theorem:main} with Proposition \ref{lemma:homology}.
\end{proof}

It is \emph{not} true that $\C(X)$ only depends on the homology of $X$, see for instance \cite{Kiessling}.

We can also describe the acyclicity relations between all finite chain complexes.

\begin{corollary} \label{cor:whenacyclic}
Fix two finite chain complexes $X$ and $Y$. Then $Y$ is $X$-acyclic if and only if for every $k$:
\[
\s(H_k Y) \subset \s(\oplus_{i \leq k} H_i X)
\]
\end{corollary}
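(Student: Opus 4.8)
The plan is to prove both implications by recognizing the support condition as the defining property of a suitable acyclic class, and then for the converse to reduce the statement, via Proposition \ref{lemma:homology} and Theorem \ref{theorem:main}, to the module-level result of Theorem \ref{theorem:modules}.

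\emph{Necessity.} For the fixed complex $X$ write $S_k = \s(\oplus_{i \leq k} H_i X)$ and consider the collection
\[
\A = \{ Z \in D(R) : \s(H_k Z) \subset S_k \text{ for every } k \}.
\]
First I would check that $\A$ is an acyclic class containing $X$. It contains $X$ because $H_k X$ is a summand of $\oplus_{i \leq k} H_i X$. It is closed under arbitrary sums because homology commutes with direct sums and $\s(\oplus_\alpha M_\alpha) = \cup_\alpha \s(M_\alpha)$. Closure under cones and under extensions both follow from the long exact homology sequence of a triangle $U \ra V \ra W \ra \Sigma U$, together with the elementary facts that $\s$ of a submodule or quotient is contained in $\s$ of the ambient module and that $\s$ of the middle term of a short exact sequence is the union of the supports of the outer terms. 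The only delicate point is the degree bookkeeping: in the cone case the relevant piece of the sequence is $H_k V \ra H_k W \ra H_{k-1} U$, so $\s(H_k W) \subset S_k \cup S_{k-1} = S_k$, where the last equality holds because $S_{k-1} \subset S_k$. It is precisely the degree drop in the connecting map that forces the cumulative index $i \leq k$ rather than $i = k$. Granting that $\A$ is acyclic, $\A(X) \subset \A$, and hence $Y > X$ yields the asserted containments.

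\emph{Sufficiency.} Assume $\s(H_k Y) \subset S_k$ for all $k$. By Proposition \ref{lemma:homology} we have $Y > \oplus_k \Sigma^k H_k Y$, so by transitivity it suffices to prove $\Sigma^k H_k Y > X$ for every $k$. Since $H_k Y$ and $\oplus_{i \leq k} H_i X$ are finitely generated modules with $\s(H_k Y) \subset \s(\oplus_{i \leq k} H_i X)$, Theorem \ref{theorem:modules} gives that $H_k Y$ is $(\oplus_{i \leq k} H_i X)$-acyclic as modules in degree zero. Applying the suspension functor $\Sigma^k$, which is an autoequivalence preserving triangles, sums and extensions, this becomes $\Sigma^k H_k Y > \oplus_{i \leq k} \Sigma^k H_i X$. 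It then remains to show $\Sigma^k H_i X > X$ for each $i \leq k$. This I would obtain in two steps: Theorem \ref{theorem:main} gives $\Sigma^i H_i X \gg X$, hence $\Sigma^i H_i X > X$; and since $k - i \geq 0$ and acyclic classes are closed under suspension, $\Sigma^k H_i X = \Sigma^{k-i}(\Sigma^i H_i X)$ lies in $\A(\Sigma^i H_i X) \subset \A(X)$. Transitivity then yields $\Sigma^k H_k Y > X$. Finally, closure of $\A(X)$ under sums gives $\oplus_k \Sigma^k H_k Y > X$, and transitivity with $Y > \oplus_k \Sigma^k H_k Y$ gives $Y > X$.

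The routine parts are the support computations from the long exact sequence; the step I expect to carry the real content is the sufficiency reduction, where the global relation $Y > X$ is assembled from the module-level Theorem \ref{theorem:modules} by correctly matching degrees, the key being that the hypothesis at level $k$ feeds the \emph{cumulative} homology $\oplus_{i \leq k} H_i X$ and that the extra suspensions $\Sigma^{k-i}$, all in the nonnegative direction, do not leave $\A(X)$. The asymmetry between positive and negative suspensions --- reflecting that $\A(X)$ is closed under $\Sigma$ but not under $\Sigma^{-1}$ --- is exactly what makes the one-sided condition $i \leq k$ the correct one in both directions.
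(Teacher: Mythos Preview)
Your proof is correct and is essentially a careful unpacking of the paper's one-line proof ``Combine Corollary \ref{cor:homologyonly} with Theorem \ref{theorem:modules}.'' Since Corollary \ref{cor:homologyonly} is itself the combination of Proposition \ref{lemma:homology} and Theorem \ref{theorem:main}, your sufficiency argument is exactly the paper's argument made explicit; for necessity you supply the natural details (the graded-support acyclic class $\A$) that the paper leaves to the reader.
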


\begin{proof}
Combine Corollary \ref{cor:homologyonly} with Theorem \ref{theorem:modules}.
\end{proof}

An other consequence is that cellular classes are closed under homological epimorphisms, at least between finite chain complexes.
\begin{corollary} \label{cor:homepi}
Let $X$ and $Y$ be two finite chain complexes. If there is a map $f:X \rightarrow Y$ such that for every $k$, $H_k f$ is an epimorphism, then $Y$ is $X$-cellular.
\end{corollary}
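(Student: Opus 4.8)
The plan is to realize $Y$ as a cone and feed the situation into Proposition \ref{prop:acycliccell}. First I would complete the given map to an exact triangle
\[
\xymatrix{X \ar^{f}[r] & Y \ar[r] & C_f \ar[r] & \Sigma X}
\]
and rotate it to the equivalent triangle
\[
\xymatrix{\Sigma^{-1} C_f \ar[r] & X \ar^{f}[r] & Y \ar[r] & C_f}
\]
which exhibits $Y$ as the cone of the map $\Sigma^{-1} C_f \rightarrow X$. Since $X \gg X$ holds trivially, Proposition \ref{prop:acycliccell} (with its fixed complex taken to be $X$, its middle term $X$, and its cone $Y$) will yield $Y \gg X$ as soon as I know that the fiber $\Sigma^{-1} C_f$ is $X$-acyclic.

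The next step is to analyze the homology of $C_f$ through the long exact sequence of the first triangle,
\[
\xymatrix{H_{k+1} X \ar^{H_{k+1}f}[r] & H_{k+1} Y \ar[r] & H_{k+1} C_f \ar[r] & H_k X \ar^{H_k f}[r] & H_k Y}
\]
Here the hypothesis enters decisively: because $H_{k+1} f$ is an epimorphism its image is all of $H_{k+1} Y$, so by exactness the map $H_{k+1} Y \rightarrow H_{k+1} C_f$ is zero, and hence the connecting homomorphism $H_{k+1} C_f \rightarrow H_k X$ is injective. Thus each $H_{k+1} C_f$ is isomorphic to a submodule of $H_k X$ (namely $\ker H_k f$); in particular it is finitely generated, so $C_f$ and therefore $\Sigma^{-1} C_f$ are finite chain complexes, and
\[
\s\big(H_k(\Sigma^{-1} C_f)\big) = \s\big(H_{k+1} C_f\big) \subset \s(H_k X) \subset \s\big(\oplus_{i \leq k} H_i X\big)
\]
for every $k$.

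The displayed support condition is precisely the criterion of Corollary \ref{cor:whenacyclic}, which applies since both $\Sigma^{-1} C_f$ and $X$ are finite; it gives $\Sigma^{-1} C_f > X$. Combining this with $X \gg X$ and applying Proposition \ref{prop:acycliccell} to the rotated triangle then gives $Y \gg X$, as required.

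I do not anticipate a genuine obstacle, as the argument is essentially a bookkeeping of the homology long exact sequence feeding the two structural results \ref{prop:acycliccell} and \ref{cor:whenacyclic}. The one point that must be watched is the \emph{direction} of the support inequality: the epimorphism hypothesis is used exactly to force the degree-$(k{+}1)$ homology of the fiber to inject into $H_k X$, which is what produces the containment in $\s(\oplus_{i \leq k} H_i X)$ with no contribution from strictly higher-degree homology of $X$. Were $H f$ merely nonzero rather than surjective, the connecting map need not be injective and this bound could fail, which is consistent with the reverse implication being false in general.
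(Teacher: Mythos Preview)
Your argument is correct and is essentially the same as the paper's: complete $f$ to a triangle, use the long exact homology sequence together with the epimorphism hypothesis to see that $H_{k+1}C_f$ injects into $H_kX$, invoke Corollary~\ref{cor:whenacyclic} to get $\Sigma^{-1}C_f>X$ (the paper phrases this equivalently as $C_f>\Sigma X$), and then apply Proposition~\ref{prop:acycliccell} to the rotated triangle. You have merely made the rotation and the degree shift explicit where the paper leaves them implicit.
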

\begin{proof}
We complete the map $f$ into an exact triangle
\[
\xymatrix{ X \ar^f[r] & Y \ar[r] & C \ar[r] & \Sigma X}
\]
From the induced long exact sequence of homology and the assumption that $H_k f$ is an epimorphism, for every $k$, we see that $H_k C$ is a submodule of the finitely generated module $H_{k-1} X$. By Corollary \ref{cor:whenacyclic} $C > \Sigma X$. From Proposition \ref{prop:acycliccell} we deduce that $Y \gg X$. The proof is complete.
\end{proof}

Hence also acyclic classes are closed under homological epimorphisms. They are also closed under homological subcomplexes:
\begin{corollary} \label{cor:hommonoepi}
Let $X$ and $Y$ be finite chain complexes. If there is a map $f:X \rightarrow Y$ such that for every $k$, $H_k f$ is an epimorphism then $Y$ is $X$-acyclic. If instead $H_k f$ is a monomorphism for every $k$, then $X$ is $Y$-acyclic.
\end{corollary}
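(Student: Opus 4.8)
My plan is to reduce both assertions to the explicit support criterion of Corollary \ref{cor:whenacyclic}, which already packages the substantive work (Theorem \ref{theorem:main} together with Theorem \ref{theorem:modules}). Throughout I would use that $H_k X$ and $H_k Y$ are finitely generated, since $X$ and $Y$ are finite, and that the support of a submodule or of a quotient of a finitely generated module is contained in the support of the ambient module (the conventions recalled in Section \ref{section:support}).

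For the epimorphism case I would first observe that Corollary \ref{cor:homepi} already gives $Y \gg X$; since any cellular class containing $X$ lies inside $\A(X)$, i.e. $\C(X) \subset \A(X)$, this yields $Y > X$ at once. Equivalently, staying purely within the support framework, each $H_k Y$ is a quotient of $H_k X$ through $H_k f$, so $\s(H_k Y) \subset \s(H_k X) \subset \s(\oplus_{i \leq k} H_i X)$, and Corollary \ref{cor:whenacyclic} delivers $Y > X$ directly.

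For the monomorphism case the key step is that $H_k X$ embeds as a submodule of $H_k Y$ for every $k$, whence $\s(H_k X) \subset \s(H_k Y) \subset \s(\oplus_{i \leq k} H_i Y)$. Feeding this into Corollary \ref{cor:whenacyclic} (read with the two complexes in the opposite roles) gives $X > Y$, which is precisely the claim that $X$ is $Y$-acyclic.

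The honest answer is that there is no serious obstacle once Corollary \ref{cor:whenacyclic} is in hand; the statement is essentially a formal reading-off of the support condition. I would nonetheless flag the tempting wrong turn in the monomorphism case, namely completing $f$ to an exact triangle $X \to Y \to C \to \Sigma X$ and trying to build $X$ as an extension. The long exact sequence in homology shows $H_k C$ is a quotient of $H_k Y$, so $C > Y$; but the extension that presents $X$ involves $\Sigma^{-1} C$ rather than $C$, and acyclic classes need not be closed under desuspension, so this route stalls. Passing through the support criterion is exactly what sidesteps this desuspension issue.
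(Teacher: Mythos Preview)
Your proof is correct and follows essentially the same route as the paper: the epimorphism case is obtained from Corollary~\ref{cor:homepi} (cellular implies acyclic), and the monomorphism case is read off from the support inclusion $\s(H_k X) \subset \s(H_k Y)$ via Corollary~\ref{cor:whenacyclic}. Your alternative support argument for the epi case and your remark on the desuspension pitfall are accurate extras; note only that the support-of-submodule/quotient facts you cite live in the Notation and Conventions section rather than in Section~\ref{section:support}.
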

\begin{proof}
The first statement is a particular case of Corollary \ref{cor:homepi}. If instead $H_k f$ is a monomorphism for all $k$ then $\s(H_k X) \subset \s(H_k Y)$ and by Corollary \ref{cor:whenacyclic}, $X>Y$.
\end{proof}

Having established some consequences of Theorem \ref{theorem:main} we now turn to its proof. First a lemma.

\begin{lemma} \label{lemma:preliminary}
Let $X \in D^f(R)$. Then, for each $k$, there is some $n_k \geq k$ such that $\Sigma^{n_k} H_k X$ is $X-$cellular.
\end{lemma}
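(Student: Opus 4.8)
The plan is to prove the statement by a double induction: an outer induction on the number of nonzero homology groups of $X$, and, inside it, an induction on the number of generators of a single homology module. Throughout, the only tool that \emph{produces} cellularity is Lemma \ref{lemma:principal} (cellularity of cyclic quotients), and the only tool that \emph{assembles} pieces is Proposition \ref{prop:acycliccell}. First I would replace $X$ by an isomorphic bounded complex and reduce to the case of the bottom homology. Let $m$ be the lowest degree with $H_m X\neq 0$. Granting a ``bottom statement'' of the form $\Sigma^{n}H_m X \gg X$ for some $n\ge m$, consider the canonical map $\phi:\Sigma^{n-m}X \to \Sigma^{n}H_m X$, which exists and is an isomorphism on $H_n$ because $\Sigma^{n-m}X$ has vanishing homology below degree $n$. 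Since $\Sigma^{n-m}X \gg X$ (cellular classes are closed under suspension) and $\Sigma^{n}H_m X \gg X$, the cone $X'$ of $\phi$ is again $X$-cellular; its homology is that of $X$ in degrees $>m$, suitably shifted, so $X'$ has one fewer nonzero homology group while $X'\gg X$. Applying the outer induction to $X'$ and using transitivity of $\gg$ then delivers $\Sigma^{n_k}H_k X \gg X$ with $n_k\ge k$ for every remaining $k$.

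For the bottom case we have $H_i X = 0$ for $i<m$, so there is a map $f:X \to \Sigma^m N$, with $N := H_m X$, inducing an isomorphism, hence a surjection, on all homology. I would induct on the number $r$ of generators of $N$. If $r=1$ then $N=R/I$ is cyclic and Lemma \ref{lemma:principal} gives $\Sigma^m N \gg X$ outright. If $r>1$, pick generators and form a short exact sequence $0\to N'\to N\to R/I\to 0$ with $N'$ generated by $r-1$ elements; composing $f$ with the projection $\Sigma^m N \to \Sigma^m R/I$ is a homology surjection, so $\Sigma^m R/I \gg X$ by Lemma \ref{lemma:principal}. Coning this composite yields a finite complex $X''\gg X$ whose bottom homology is exactly $N'$ (one degree higher) with fewer generators, and the inductive hypothesis gives $\Sigma^{a+1}N' \gg X''\gg X$ for a suitable shift $a$. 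Now I would feed a matching suspension of the extension $0\to N'\to N\to R/I\to 0$, rotated to the exact triangle $\Sigma^{a}R/I \to \Sigma^{a+1}N' \to \Sigma^{a+1}N \to \Sigma^{a+1}R/I$, into Proposition \ref{prop:acycliccell}: the first term is $X$-cellular hence $X$-acyclic, the middle term is $X$-cellular, so the proposition gives $\Sigma^{a+1}N \gg X$, completing the induction at the cost of one extra suspension per generator.

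The essential obstacle, and the reason the conclusion only yields some $n_k\ge k$ rather than $n_k=k$, is that Lemma \ref{lemma:principal} manufactures cellularity only for \emph{cyclic} quotients, whereas $H_m X$ is an arbitrary finitely generated module and there is no map from $X$ onto a proper submodule of it. One is thus forced to strip off cyclic quotients and then reconstitute $N$ from the extension $0\to N'\to N\to R/I\to 0$; but cellular classes are not closed under extensions, so this reassembly cannot be carried out inside $\C(X)$ directly. Proposition \ref{prop:acycliccell} is precisely what rescues the step, since it requires only the sub-object to be $X$-acyclic, and a cyclic $X$-cellular complex is in particular $X$-acyclic. The delicate part of executing this is keeping the suspensions aligned through the rotations of the extension triangles, so that the cellular and acyclic hypotheses of Proposition \ref{prop:acycliccell} fall on the correct terms; it is exactly this unavoidable accumulation of one suspension per cyclic layer and per homology degree that pushes $n_k$ above $k$. (Note that the sharper Proposition \ref{prop:useful} is not needed here and is presumably reserved for pinning $n_k$ down to $k$ in Theorem \ref{theorem:main}.)
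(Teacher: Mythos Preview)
Your argument is correct, and the overall architecture---peel off cyclic quotients from the bottom homology via Lemma \ref{lemma:principal}, then climb up the homology by passing to a cone with one fewer nonzero homology group---matches the paper's. The genuine difference lies in how you reconstitute $H_mX$ from its cyclic layers. The paper strips off \emph{all} cyclic quotients, obtaining $\Sigma^{n-1}R/\a(x_i)\gg X$ for each generator $x_i$, and then invokes Lemma \ref{lemma:sums} together with Theorem \ref{theorem:modules} to pass from $\oplus_i \Sigma^{n-1}R/\a(x_i)$ to $\Sigma^{n-1}H_0X>X$, finishing with Corollary \ref{cor:suspacyclic}. You instead strip off a \emph{single} cyclic quotient, apply the inner induction hypothesis to the cone $X''$ (whose bottom homology $N'$ has one fewer generator), and then reassemble $N$ from $N'$ and $R/I$ by feeding the rotated extension triangle $\Sigma^a R/I\to\Sigma^{a+1}N'\to\Sigma^{a+1}N$ directly into Proposition \ref{prop:acycliccell}. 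Your route is more self-contained: it never appeals to Theorem \ref{theorem:modules} (whose proof is substantial) or Lemma \ref{lemma:sums}, so in principle Lemma \ref{lemma:preliminary} could be established earlier in the logical order. The paper's route, on the other hand, is slightly slicker once Theorem \ref{theorem:modules} is available, since the reassembly is done in one stroke rather than by an explicit induction on generators.
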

\begin{proof}
The idea of this proof is similar to that of Lemma \ref{lemma:suspension}.

First we assume that $X$ is non-negative and that $H_0 X \neq 0$. 

$H_0 X$ is generated by a finite set of elements $\{x_1, \ldots, x_n\}$. Let $M_i \subset H_0 X$ be the submodule generated by $\{x_1, \ldots, x_i\}$. We define $M_0 = 0$. Then the $M_i$'s filter $H_0 X$:
\[
0 = M_0 \subset M_1 \subset \ldots \subset M_n = H_0 X
\]
Each quotient $M_i/M_{i-1}$ is isomorphic to a cyclic module $R/I_i$ for some ideal $I_i$. Moreover $I_1 = \a(x_1)$.

There is a map $f:X \rightarrow H_0 X$ such that $f$ induces an isomorphism on $H_0$. Put $X_n = X$ and $f_n = f$. For $0 < i \leq n$ we let $X_{i-1}$ be the fiber of $f_i:X_i \rightarrow M_i/M_{i-1} = R/I_i$  and let $f_{i-1}:X_{i-1} \rightarrow N_{i-1}$ be the induced map. For each $0 < i \leq n$ we have a map of two exact triangles:
\[
\xymatrix{
X_{i-1} \ar[d] \ar[r] & X_i \ar[d] \ar[r] & R/I_i \ar[d] \ar[r] & \Sigma X_{i-1} \ar[d] \\
M_{i-1} \ar[r] & M_i \ar[r] & R/I_i \ar[r] & \Sigma M_{i-1} }
\]

The maps $X_i \rightarrow R/I_i$ induce a surjection homology so by Lemma \ref{lemma:principal} $R/I_i \gg X_i$, for every $i$. Together with the above exact triangles we see that $\Sigma X_{i-1} \gg X_i$ for every $i$.

From transitivity of the relation $\gg$ we deduce that $\Sigma^{n-1} X_1 \gg X$. However $X_1$ is isomorphic to $R/\a(x_1)$. The indexing of the generators was arbitrary so in fact:
\[
\oplus_{1 \leq i \leq n} \Sigma^{n-1} R/\a(m_i) \gg X
\]
By Lemma \ref{lemma:sums} this implies
\[
\Sigma^{n-1} R/\a(H_0 X) > X
\]
Now using Theorem \ref{theorem:modules}, we see that in fact $\Sigma^{n-1} H_0 X > X$. Hence by \ref{cor:suspacyclic}, $\Sigma^n H_0 X \gg X$.

Let $k$ be such that $H_i X = 0$ for $i < k$. The above discussion shows that there is some $n_0$ such that $\Sigma^{n_0} H_k X$ is $X$-cellular. Way may assume that $X$ is bounded. Recall the filtration in the proof of Proposition \ref{lemma:homology}. The exact triangle 
\[
\xymatrix{X \ar[r] & \Sigma^k H_k X \ar[r] & \Sigma F_{k+1} X \ar[r] & X}
\]
shows that $\Sigma^{n_0+1} F_{k+1} X$ is $X$-cellular. The first non-zero homology of $F_{k+1} X$ is equal to $H_{k+1} X$. The lemma now follows from an induction over $k$.
\end{proof}
 
\begin{proof}[Proof of Theorem \ref{theorem:main}]

We show by induction over $k$ that for all $i \leq k$:
\[
\Sigma^i H_i X \gg X \oplus \bigoplus_{k+1 \leq j} \Sigma^j H_j X \quad (*)
\]

Assume that $(*)$ holds for all $k \leq n-1$, for some $n$. To complete the induction we need to show that $(*)$ holds when $k = n$. From $(*)$ with $k = n-1$ we see that:
\[
\Sigma^i H_i X \gg X \oplus \bigoplus_{n \leq j} \Sigma^j H_j X 
\]
If we can show that 
\[
\Sigma^n H_n X \gg X \oplus \bigoplus_{n+1 \leq j} \Sigma^j H_j X 
\]
then it follows that for $i < n$:
\[
\Sigma^i H_i X \gg X \oplus \Sigma^n H_n X \oplus \bigoplus_{n+1 \leq j} \Sigma^j H_j X \gg X \oplus \bigoplus_{n+1 \leq j} \Sigma^j H_j X 
\]
In other words, to complete the induction we only need to verify $(*)$ for $k = n$ and $i = n$. 

The $R$-module Hom$_{D(R)}(X, \Sigma^n H_n X)$ is finitely generated. Let $I$ denote a finite set of generators. There is a natural map $\oplus_I X \rightarrow \Sigma^n H_n X$ which we complete into an exact triangle:
\[
\xymatrix{\oplus_I X \ar[r] & \Sigma^n H_n X \ar[r] & C \ar[r] & \oplus_I \Sigma X & (**)}
\] 
The sequence $(**)$ has the property that any map $X \rightarrow \Sigma^n H_n X$ will factor through $\oplus_I X \rightarrow \Sigma^n H_n X$.

\begin{lemma}
Let $C$ be defined as above. Then: $C > \oplus_{j \in \Z} \Sigma^{j+1} H_j X$
\end{lemma}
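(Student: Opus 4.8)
The plan is to read the homology of $C$ off the long exact sequence of the triangle $(**)$ and then feed each homology module into Theorem \ref{theorem:modules}, being careful to match degrees. Write $e:\oplus_I X\ra\Sigma^n H_n X$ for the evaluation map and $\alpha=H_n(e):\oplus_I H_n X\ra H_n X$. Since $H_m(\Sigma^n H_n X)$ is $H_n X$ when $m=n$ and $0$ otherwise, the long exact sequence yields $H_k C\cong\oplus_I H_{k-1}X$ for $k\neq n,n+1$; it yields $H_{n+1}C=\ker\alpha\subseteq\oplus_I H_n X$; and it yields a short exact sequence $0\ra\operatorname{coker}\alpha\ra H_n C\ra\oplus_I H_{n-1}X\ra 0$. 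All these modules are finitely generated because $X$ is finite, so Theorem \ref{theorem:modules} is applicable to each.

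Once the homology is known, Proposition \ref{lemma:homology} gives $C>\oplus_k\Sigma^k H_k C$, so by transitivity of $>$ and closure of acyclic classes under sums it suffices to show that each $\Sigma^k H_k C$ lies in $\A(\oplus_j\Sigma^{j+1}H_j X)$. For $k\neq n,n+1$ this is immediate: $\Sigma^k H_k C\cong\oplus_I\Sigma^{(k-1)+1}H_{k-1}X$ is a finite sum of copies of the summand $\Sigma^{(k-1)+1}H_{k-1}X$ of the target. For $k=n+1$, the inclusion $\ker\alpha\subseteq\oplus_I H_n X$ gives $\s(\ker\alpha)\subseteq\s(H_n X)$, so Theorem \ref{theorem:modules} gives $\ker\alpha>H_n X$; applying $\Sigma^{n+1}$ (the relation $>$ is suspension-invariant) yields $\Sigma^{n+1}H_{n+1}C>\Sigma^{n+1}H_n X$, which is the $j=n$ summand of the target.

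The case $k=n$ is the crux, and I would handle it by suspending the short exact sequence into an exact triangle and using closure under extensions to reduce to the two pieces $\Sigma^n\!\oplus_I H_{n-1}X$ (handled as above) and $\Sigma^n\operatorname{coker}\alpha$. The difficulty is that $\operatorname{coker}\alpha$ is a quotient of $H_n X$ and therefore a priori carries support $\s(H_n X)$, but the only target summand carrying $\s(H_n X)$ is $\Sigma^{n+1}H_n X$, which sits one degree too high to absorb a contribution in degree $n$; thus this piece must be absorbed by the \emph{lower} homology of $X$, which forces me to prove $\s(\operatorname{coker}\alpha)\subseteq\s(\oplus_{i<n}H_i X)$. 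This is exactly where Proposition \ref{prop:useful} is needed: for a prime $p\in\s(H_n X)\setminus\s(\oplus_{i<n}H_i X)$ it produces $f:X\ra\Sigma^n H_n X$ with $f_p$ an isomorphism on $H_n$. Because $I$ generates $\operatorname{Hom}_{D(R)}(X,\Sigma^n H_n X)$, the map $f$ factors through $e$, so $H_n(f)$ factors through $\alpha$; localizing at $p$ (localization is exact) shows $\operatorname{im}\alpha_p=(H_n X)_p$, hence $(\operatorname{coker}\alpha)_p=0$ and $p\notin\s(\operatorname{coker}\alpha)$. This gives the desired containment.

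With $\s(\operatorname{coker}\alpha)\subseteq\s(\oplus_{i<n}H_i X)$ established, Theorem \ref{theorem:modules} gives $\operatorname{coker}\alpha>\oplus_{i<n}H_i X$, and applying $\Sigma^n$ yields $\Sigma^n\operatorname{coker}\alpha>\oplus_{i<n}\Sigma^n H_i X$. For each $i<n$ the module $\Sigma^n H_i X$ is an $(n-i-1)$-fold suspension of the target summand $\Sigma^{i+1}H_i X$, and since $n-i-1\geq 0$, repeated application of Corollary \ref{cor:suspacyclic} gives $\Sigma^n H_i X>\Sigma^{i+1}H_i X$; thus $\Sigma^n\operatorname{coker}\alpha$ is target-acyclic by transitivity. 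I expect the main obstacle to be precisely this degree bookkeeping in the $k=n$ case together with the use of Proposition \ref{prop:useful} to push the support of $\operatorname{coker}\alpha$ down into the lower homology of $X$; the remaining steps are formal manipulations of supports, suspensions, and the closure properties of acyclic classes.
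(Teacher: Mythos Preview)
Your argument is correct and follows essentially the same route as the paper: compute $H_kC$ from the long exact sequence of $(**)$, reduce via Proposition~\ref{lemma:homology} to the individual pieces $\Sigma^k H_k C$, handle $k\neq n,n+1$ and $k=n+1$ by direct support containment and Theorem~\ref{theorem:modules}, and in the crucial case $k=n$ use Proposition~\ref{prop:useful} exactly as the paper does to force $\s(\operatorname{coker}\alpha)\subset\s(\oplus_{i<n}H_iX)$. The only cosmetic difference is that you spell out the suspension bookkeeping (for which mere closure of acyclic classes under $\Sigma$ already suffices; invoking Corollary~\ref{cor:suspacyclic} is harmless overkill), whereas the paper leaves it implicit.
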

\begin{proof}
Recall that by Proposition \ref{lemma:homology} $C > \oplus \Sigma^i H_i C$. Thus to prove the lemma it is enough to show that for each $i$, $\Sigma^i H_i C > \oplus_{j < i} \Sigma^{j+1} H_j X$. From the long exact sequence of homology associated to $(*)$ we deduce that for $i \neq n, n+1$ the homology of $C$ is $H_i C \cong \oplus_I H_{i-1} X$. Therefor $\Sigma^i H_i C > \Sigma^{i} H_{i-1} X$ for $i \neq n, n+1$. We also have an exact sequence:
\[
\begin{array}{l}
\xymatrix{0 \ar[r] & H_{n+1} C \ar[r] & \oplus_I H_n X \ar[r] & H_n X \ar[r] & } \\
\xymatrix{H_n C \ar[r] & \oplus_I H_{n-1} X \ar[r]& 0}
\end{array}
\]
We see that $H_{n+1} C$ is a submodule of a finite sum of $H_n X$ so its support is a subset of the support of $H_n X$. By Theorem \ref{theorem:modules} this implies that $H_{n+1} C > H_n X$.

It remains to show that $\Sigma^n H_n C > \oplus_{j<n} \Sigma^{j+1} H_j X$

Let $M$ denote the cokernel of the map $\oplus_I H_n X \rightarrow  H_n X$. 

The $R$-module $M$ is finitely generated and $H_n C$ is an extension of $\oplus_I H_{n-1} X$ by $M$. To prove the claim we need to show that $M$ is $\oplus_{j<n} H_j X$-acyclic. By Theorem \ref{theorem:modules} this is equivalent to:
\[
\s(M) \subset \s(\oplus_{j<n} H_j X)  
\]
Assume that this is not the case. Then there is a prime ideal $p \in \s(M)$ such that $p \notin \s(\oplus_{j<n} H_j X)$. By Proposition \ref{prop:useful} there is a map $f:X \rightarrow \Sigma^n H_nX$ such that at $p$ the map $f_p$ induces an isomorphism on $H_n$. The prime ideal $p$ belongs to the support of $M$ so $M_p$ is non-zero and the composition of $f$ with the quotient map $\Sigma^n H_n X \rightarrow \Sigma^n M$ is non-zero on $H_n$. Call this composition $h$. By the construction of $C$ the map $f$ will factor through $\oplus_I X \rightarrow \Sigma^n H_n X$ so $H_n(h)$ must be zero. This is a contradiction. Hence $\Sigma^n H_n C > \oplus_{j<n} \Sigma^{j+1} H_j X$ and we have proved the lemma.
\end{proof}

The lemma together with the induction hypothesis implies that: 
\[
C > \Sigma X \oplus \Sigma^{n+1} H_n X \oplus \bigoplus_{n<j} \Sigma^{j+1} H_j X 
\]
This relation and the exact triangle $(**)$ shows that:  
\[
\Sigma^n H_n X > \Sigma^{n+1} H_n X \oplus X \oplus \bigoplus_{n<j} \Sigma^{j+1} H_j X
\]
Since the relation $>$ is transitive and preserved by suspensions we get that for any $m \geq 1$:
\[
\Sigma^n H_n X > \Sigma^{n+m} H_n X \oplus X \oplus \bigoplus_{n<j} \Sigma^{j+1} H_j X
\]
For $m$ large enough $\Sigma^{n+m} H_n X > X$ so in fact we have shown that:
\[
\Sigma^n H_n X > X \oplus \bigoplus_{n<j} \Sigma^{j+1} H_j X
\]
Substituting back for $C$ this gives:
\[
C > \Sigma X \oplus \bigoplus_{n<j} \Sigma^{j+1} H_j X 
\]

Together with Proposition \ref{prop:acycliccell} this completes the induction and also the proof of the theorem.

\end{proof}

We conclude this section with a final application of Theorems \ref{theorem:modules} and \ref{theorem:main}. 

In \cite{MR1174255} Neeman classifies all localizing subcategories of $D(R)$. A localizing subcategory is a full subcategory generated by a cellular class closed under $\Sigma^{-1}$. He proves that localizing subcategories are in a 1-1 correspondence with arbitrary subsets of Spec $R$ (the set of all prime ideals in $R$). Under this correspondence a finite chain complex $X$ will belong to a localizing subcategory $\L$ if and only if the set of prime ideals corresponding to $\L$ contains the support of the homology of $X$.

It is natural to ask for a similar classification of all cellular or acyclic classes. It turns out that there are in general too many cellular classes. In \cite{Staney} Stanley shows that the class of all acyclic classes in $D(\Z)$ is proper. However Stanley also shows that if we restrict to acyclic classes in $D^f(R)$ then a classification is possible.

We now reprove Stanley's classification using the results of this paper. 

We first define what an acyclic class in $D^f(R)$ is.

\begin{definition}
An \emph{acyclic class in $D^f(R)$} is a collection $\A$ of finite chain complexes such that:
\[
\A = D^f(R) \cap \A'
\]
where $ \A'$ is an (ordinary) acyclic class.
\end{definition}

The acyclic class $ \A'$ is not part of the definition. There might be two different acyclic classes $ \A'$ and $ \A''$ which give rise to the same acyclic class in $D^f(R)$.  

We say that a subset $V \subset $ Spec $R$ is closed under specialization if whenever $V$ contains a prime ideal $p$ then $V$ contains all prime ideals containing $p$.

To any acyclic class $\A$ in $D^f(R)$, we associate the function:
\[
\phi_{\A}: \Z \rightarrow 
\left\{ \begin{array}{c} \textrm{subsets of Spec $R$} \\
\textrm{closed under specialization} \end{array} \right\}
\]
sending $i \in \Z$ to 
\[
\phi_{\A}(i) = \{ p \in \textrm{ Spec $R$ }| \exists X \in \A \textrm{ such that $p \in \s(H_k X)$} \}
\]

This set is closed under specialization so the function is well defined.

A function 
\[\phi: \Z \rightarrow 
\left\{ \begin{array}{c} \textrm{subsets of Spec $R$} \\
\textrm{closed under specialization} \end{array} \right\}
\]
such that $\phi(i) \subset \phi(i+1)$ for every $i$ is called \emph{increasing}. For example, the function $\phi_{\A}$ is increasing.

To any increasing function $\phi$ we associate the acyclic class:
\[
\A'_{\phi} = \{ X \in D(R) | \textrm{ for every $k$, } \s(H_k X) \subset \phi(k) \}
\]
We let $\A_{\phi}$ denote the acyclic class in $D^f(R)$ given by $\A'_{\phi} \cap D^f(R)$.

In light of Theorem \ref{theorem:modules} and Corollary \ref{cor:whenacyclic} it is straightforward to verify that the maps
\[
\begin{array}{lcl}
\A & \mapsto & \phi_{\A} \\
\phi  & \mapsto & \A_{\phi} 
\end{array}
\]
are inverse isomorphisms between the set of all acyclic classes in $D^f(R)$ and the set of all increasing functions.

In all we have proved:

\begin{corollary} \label{cor:stanley}
The set of all acyclic classes in $D^f(R)$ is isomorphic to the set of all increasing functions
\[
\Z \rightarrow 
\left\{ \begin{array}{c} \textrm{subsets of Spec $R$} \\
\textrm{closed under specialization} \end{array} \right\}
\]
\end{corollary}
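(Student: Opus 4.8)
The plan is to check directly that the two assignments $\A \mapsto \phi_{\A}$ and $\phi \mapsto \A_{\phi}$ are mutually inverse; as both are evidently monotone for the natural inclusions, this yields an isomorphism of posets. The well-definedness of $\phi_{\A}$ has already been observed (each $\phi_{\A}(i)$ is specialization closed since supports of finitely generated modules are, and $\phi_{\A}$ is increasing because $\A$ is closed under $\Sigma$ and $H_{i+1}(\Sigma X) = H_i(X)$), so the first point to settle is that $\A_{\phi}$ really is an acyclic class in $D^f(R)$, i.e. that $\A'_{\phi}$ is an acyclic class. Closure under sums is clear. For a triangle $X \to Y \to Z \to \Sigma X$ the long exact homology sequence yields $\s(H_k Z) \subset \s(H_k Y) \cup \s(H_{k-1} X)$ and $\s(H_k Y) \subset \s(H_k X) \cup \s(H_k Z)$; the first inclusion gives closure under cones and is exactly where increasingness of $\phi$ is used (to absorb $\phi(k-1)$ into $\phi(k)$), while the second gives closure under extensions.

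Next I would verify $\phi_{\A_{\phi}} = \phi$. The inclusion $\phi_{\A_{\phi}}(i) \subset \phi(i)$ is immediate from the definition of $\A'_{\phi}$. For the reverse inclusion, given $p \in \phi(i)$, I would test against the finite complex $\Sigma^i R/p$, whose only nonzero homology is $R/p$ in degree $i$; its support $\s(R/p)$, the set of primes containing $p$, is contained in $\phi(i)$ because $\phi(i)$ is specialization closed and contains $p$. Hence $\Sigma^i R/p \in \A_{\phi}$ and $p \in \s(H_i(\Sigma^i R/p))$, so $p \in \phi_{\A_{\phi}}(i)$.

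The substantial direction is $\A_{\phi_{\A}} = \A$. One inclusion is tautological: if $X \in \A$ then every $p \in \s(H_k X)$ lies in $\phi_{\A}(k)$, so $X \in \A_{\phi_{\A}}$. For $\A_{\phi_{\A}} \subset \A$, let $X \in D^f(R)$ satisfy $\s(H_k X) \subset \phi_{\A}(k)$ for all $k$, and write $\A = \A' \cap D^f(R)$ for an acyclic class $\A'$. By Corollary \ref{cor:homologyonly}, $X \in \A(\oplus_i \Sigma^i H_i X)$; thus if $\oplus_i \Sigma^i H_i X \in \A'$ then $\A(\oplus_i \Sigma^i H_i X) \subset \A'$ and $X \in \A' \cap D^f(R) = \A$. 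Since $\A'$ is closed under direct sums, it therefore suffices to prove each $\Sigma^i H_i X \in \A$. Fixing $i$ and setting $M = H_i X$, I look for $Z \in \A$ with $\Sigma^i M > Z$: then $\Sigma^i M \in \A(Z) \subset \A'$, whence $\Sigma^i M \in \A$. By Corollary \ref{cor:whenacyclic} the relation $\Sigma^i M > Z$ reduces to $\s(M) \subset \s(\oplus_{j \le i} H_j Z)$. Since $R$ is Noetherian and $M$ finitely generated, $\s(M)$ is the union of $\s(R/p_1), \ldots, \s(R/p_r)$ over the finitely many minimal primes $p_l$ of $\s(M)$; each $p_l$ lies in $\phi_{\A}(i)$, so there is $Y_l \in \A$ with $p_l \in \s(H_i Y_l)$. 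Putting $Z = \oplus_l Y_l \in \A$, the set $\s(\oplus_{j \le i} H_j Z)$ is a finite union of closed sets, hence specialization closed, and it contains each $p_l$ and therefore all of $\s(M)$. This produces the desired $Z$ and completes the argument.

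The inclusion $\A_{\phi_{\A}} \subset \A$ is the main obstacle, being the only step that uses the deep results of the paper: reconstructing membership in $\A$ from the purely homological support data $\phi_{\A}$ relies on Corollary \ref{cor:homologyonly} (a finite complex sits in the acyclic class of its homology, which rests on Theorem \ref{theorem:main}) together with the support criterion of Theorem \ref{theorem:modules} and Corollary \ref{cor:whenacyclic}. The remaining verifications are routine manipulations of long exact sequences, supports, and the closure axioms of acyclic classes.
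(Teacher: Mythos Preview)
Your proof is correct and is precisely the verification the paper declares ``straightforward'' given Theorem~\ref{theorem:modules} and Corollary~\ref{cor:whenacyclic}; the paper writes out none of the details, so your argument simply fills in what it omits, with no meaningful divergence in approach. Your use of Corollary~\ref{cor:homologyonly} in the hard inclusion $\A_{\phi_{\A}} \subset \A$ is harmless (it is already packaged inside Corollary~\ref{cor:whenacyclic}), and one could even bypass the reduction to single homology modules by building a single witness $Z = \bigoplus_k Z_k \in \A$ and applying Corollary~\ref{cor:whenacyclic} directly to $X$; either way the substance is identical.
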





\providecommand{\bysame}{\leavevmode\hbox to3em{\hrulefill}\thinspace}
\providecommand{\MR}{\relax\ifhmode\unskip\space\fi MR }
\providecommand{\MRhref}[2]{%
  \href{http://www.ams.org/mathscinet-getitem?mr=#1}{#2}
}
\providecommand{\href}[2]{#2}

\end{document}